\newcommand*{\dt}[1]{%
  \accentset{\mbox{\large\bfseries .}}{#1}}
\setlist[itemize]{itemsep=0pt,parsep=1pt,topsep=1pt,partopsep=1pt,leftmargin=*}
\setlist[enumerate]{itemsep=0pt,parsep=1pt,topsep=1pt,partopsep=1pt, leftmargin=*}
\setlist[description]{itemsep=0pt,parsep=1pt,topsep=1pt,partopsep=1pt, leftmargin=*}
\providecommand{\theoremname}{Theorem}
\providecommand{\propositionname}{Proposition}
\providecommand{\corollaryname}{Corollary}
\providecommand{\lemmaname}{Lemma}
\providecommand{\remarkname}{Remark}
\providecommand{\proofname}{Proof}
\newcommand{\Lcal}{\mathcal{L}}
\newcommand{\Scal}{\mathcal{S}}
\newcommand{\softmax}{\operatorname{SoftMax}}
\newcommand{\topkind}{\operatorname{TopKInd}}
\newcommand{\sign}{\operatorname{Sign}}
\newcommand{\newinds}{\dt{\alpha}}
\newcommand{\rmvinds}{\newinds_{-}}
\newcommand{\addinds}{\newinds_{+}}
\newcommand{\bijection}{\mathfrak{b}}
\newcommand{\allpairs}{\mathfrak{B}}
\newcommand{\E}{\mathbb{E}}
\newcommand{\Var}{\operatorname{Var}}
\newcommand{\R}{\mathbb{R}}
\newcommand{\calZ}{\mathcal{Z}}
\newcommand{\proj}{\operatorname{Proj}}
\newcommand{\ind}{\mathbbm{1}}
\newcommand{\ones}{\mathbf{1}}
\newcommand{\gap}{\mathfrak{G}}
\newcommand{\ovrload}{\mathbf{U}_{>}}
\newcommand{\balload}{\mathbf{U}_{=}}
\newcommand{\udrload}{\mathbf{U}_{<}}
\newcommand{\lr}{\texttt{lr}}
\newcommand{\wdy}{\texttt{wd}}
\newcommand{\bi}{\bar{i}}
\theoremstyle{plain}
\newtheorem{thm}{\protect\theoremname}
\theoremstyle{plain}
\newtheorem{lem}[thm]{\protect\lemmaname}
\newenvironment{proof}[1][\protect\proofname]{\par
	\normalfont\topsep6\p@\@plus6\p@\relax
	\trivlist
	\itemindent\parindent
	\item[\hskip\labelsep\scshape #1]\ignorespaces
}{%
	\endtrivlist\@endpefalse
}
\providecommand{\proofname}{Proof}
\theoremstyle{plain}
\theoremstyle{remark}
\newtheorem{rem}[thm]{\protect\remarkname}
\theoremstyle{plain}
\newtheorem{prop}[thm]{\protect\propositionname}
\title{
    \textbf{A Theoretical Framework for Auxiliary-Loss-Free Load Balancing of Sparse Mixture-of-Experts \\ in Large-Scale AI Models}
}
\author{
    X.Y. Han\thanks{Authors listed alphabetically.} \\
    Chicago Booth \\
    \texttt{XY.Han@chicagobooth.edu}
    \and 
    Yuan Zhong\footnotemark[1] \\
    Chicago Booth \\
    \texttt{Yuan.Zhong@chicagobooth.edu}
}
\begin{document}
\onehalfspacing
\maketitle

\begin{abstract}
\noindent In large-scale AI training, Sparse Mixture-of-Experts (s-MoE) layers enable scaling by activating only a small subset of experts per token. An operational challenge in this design is load balancing: routing tokens to minimize the number of idle experts, which is important for the efficient utilization of costly GPUs and for the thorough training of architecture parameters across all experts. We provide a theoretical framework for analyzing the Auxiliary-Loss-Free Load Balancing (ALF-LB) procedure --- proposed by DeepSeek's \citet{wang2024auxiliary} --- by casting it as a primal-dual method using a single-shot, constant-time update per training iteration for solving an assignment problem. First, in a stylized deterministic setting, our framework yields several insightful structural properties: (i) a monotonic improvement condition for the Lagrangian objective, (ii) a preference rule that moves tokens from overloaded to underloaded experts, and (iii) an approximate-balancing guarantee. Then, we incorporate the stochastic and dynamic nature of AI training using a generalized online optimization formulation. In the online setting, we derive a strong convexity property of the objective that leads to a logarithmic expected regret bound under certain step-size choices. Additionally, we present real experiments on 1B-parameter DeepSeekMoE models to complement our theoretical findings. Together, these results build a principled framework for analyzing the Auxiliary-Loss-Free Load Balancing of s-MoE in AI models.\sloppy
\end{abstract}

\section{Introduction: s-MoEs and Load Balancing in AI Training}\label{sec:intro}
Scaling architecture size has been the dominant driver of modern AI performance, with larger models consistently achieving better results \citep{kaplan2020scaling, hoffmann2022chinchilla,epoch2023aitrends}. However, AI development has reached the point where the scaling of computation becomes prohibitively expensive due to hardware and energy constraints \citep{strubell2019energy, thompson2020computational, sevilla2022compute}.  Adapting to the cost and hardware limitations of scaling, researchers have turned to sparse Mixture-of-Experts (s-MoE) architectures \citep{shazeer2017}, which are sparse realizations within the mixture-of-experts (MoE) paradigm codified by  \citet{jacobs1991adaptive}. 

In modern large-scale AI architectures, s-MoE layers --- consisting of several parallel subnetworks (``experts'') controlled by a ``sparse gate'' or {\it router} that selects data to route to them --- have largely replaced single submodules through which all data must pass. In these s-MoEs, for each input, the sparse-gating component selects a strict subset of experts (hence ``sparse'') to apply to that input. Thus, only a small subcomponent of an AI architecture is activated to process each piece of input data --- allowing models to have significantly more parameters while keeping inference and training costs manageable. As a testament to s-MoEs' utility, recent releases of OpenAI's GPT \citep{openai2024gpt4}, Google's Gemini \citep{google2024gemini}, and DeepSeek \citep{deepseek2024v3, deepseekai2026deepseekv4} have all leveraged s-MoE designs to improve efficiency and maintain performance scaling.

However, a crucial aspect of s-MoE design --- load balancing (controlling ``how many inputs per expert'') --- is mostly developed using trial-and-error motivated by heuristic insights (see Section \ref{sec:intro_balancing}). Learning to precisely and mathematically balance the load across experts, which reduces monetary losses from idle GPUs, could lead to enormous monetary savings for AI training.

\begin{figure}[h]
    \centering
    \includegraphics[width=\textwidth]{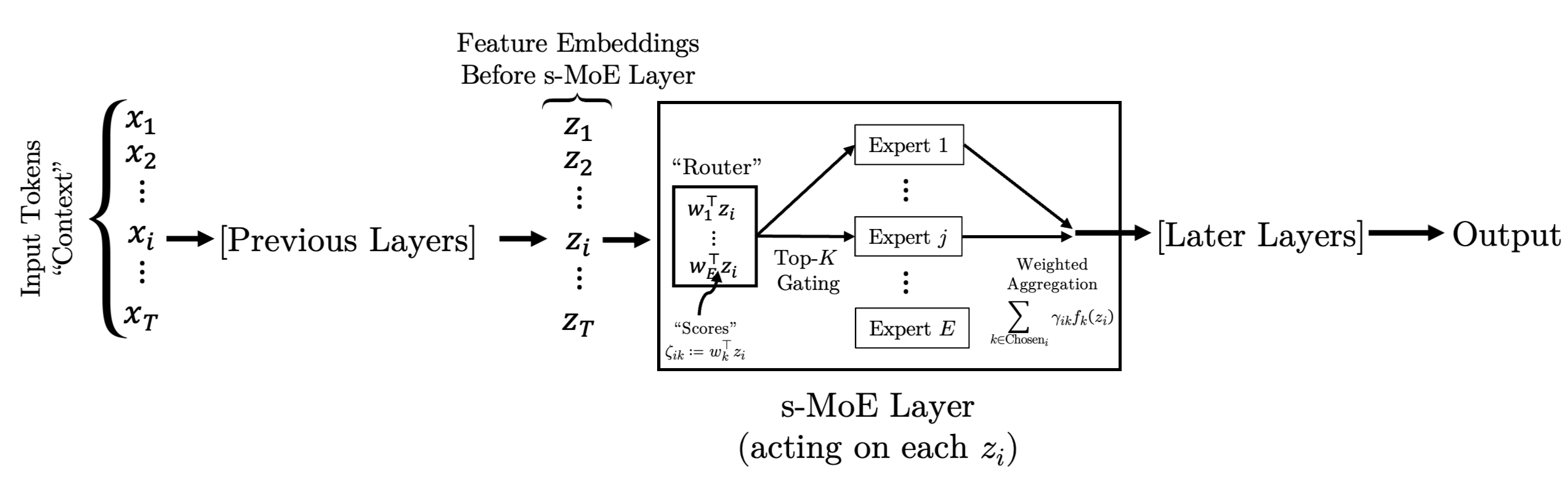}
    \caption{Schematic of a naïve s-MoE layer without load balancing.}
    \label{fig:moe_layer}
\end{figure}

\subsection{Naïve s-MoE Layers Without Load Balancing} \label{sec:setup}

Figure \ref{fig:moe_layer} describes the ``naïve'' setup for s-MoE layers within transformer-based AI models. In particular, the input is a series of {\it token embeddings} $x_1, x_2, ..., x_T$ where each $x_i$ is a high-dimensional vector corresponding (in language models) to a language unit such as ``Hel'', ``lo'', ``world'', etc. or (in vision models) a patch within an image. Each piece of input data (a sentence, an image patch, etc.) is decomposed into constituent tokens; each token is mapped to its vector embedding $x_i$; and those embeddings are input into the AI model. The entire tuple of vectors $\{x_i\}_{i=1}^T$ is called the {\it context} and $T$ is the {\it context length}.

Within the AI model, each of the original token embeddings $x_i$ is transformed into {\it feature embeddings} by each of the AI model's layers. In Figure \ref{fig:moe_layer}, to describe the action of some particular s-MoE layer, we use $\{z_i\}_{i=1}^T$ to denote the feature embeddings before that s-MoE layer.\sloppy

When a feature embedding ``enters'' an s-MoE layer with $E$ experts, we calculate an unnormalized affinity score $\zeta_{i,k}$ between $z_i$ and the $k$-th expert --- usually using an inner product:
$\zeta_{i,k} := w_k^\top z_i$. These scores are then normalized, typically using the ``softmax'' function, into the {\it affinity scores}:
\begin{equation}
\gamma_{i,k} := \softmax\left(\zeta_{i,k};\ \{\zeta_{i,k^\prime}\}_{k^\prime=1}^E\right) = \frac{\exp(\zeta_{i,k})}{\sum_{k^\prime =1}^E \exp(\zeta_{i,k^\prime})}\label{eq:affinity}
\end{equation}
The router then selects the Top-$K$ experts based on the $K$ largest $\gamma_{i,k}$. The final step in an s-MoE layer is to aggregate the outputs of the selected experts. This is done by computing a weighted sum of the selected experts' outputs:
\begin{equation}
    \sum_{k \in \text{ChosenExperts}_i} \gamma_{i,k} f_k(z_i),
    \label{eq:moe_aggregation}
\end{equation}
where $f_k$ represents the $k$-th expert. Note that the softmax is taken {\it before} the Top-$K$ selection, which is typically the preferred order in recent s-MoEs \citep{dai2024deepseekmoe, riquelme2021scaling}. Moreover, the softmax is monotonic, so it is equivalent to choose the Top-$K$ experts based on the $K$ largest $\{\gamma_{i,k}\}_{k=1}^E$ for each $i$, where $K < E$ for s-MoEs. This completes the description of the schematic in Figure \ref{fig:moe_layer}.
\vspace{6pt}

\subsection{Load Balancing of Experts: Background and Related Work}\label{sec:intro_balancing}
While conceptually simple, the naïve routing method of choosing the top-$K$ experts based only on $\{\gamma_{i,k}\}_{k=1}^E$ often causes load imbalance when assigning tokens to experts. This creates critical issues for both deployment and training. During deployment, imbalance results in underutilized GPUs hosting idling experts, which wastes costly computing resources. Although one could also balance GPU usage across {\it multiple requests} in high-traffic deployment scenarios (see, for example \citet{markovic2026robust} and citations therein), across-requests balancing operate in the deployment stage (not the training stage) and remain vulnerable to inefficiencies when traffic is low and expert affinities are skewed. Moreover, during training, such imbalance prevents effective learning across network parameters across all s-MoE layer experts: This is due to the uneven training of the experts that, in turn, induces a self-perpetuating cycle where the router preferentially selects better-trained experts while others become more underutilized and undertrained. Since every expert must be hosted on costly GPUs during training such imbalance could lead to significant monetary losses during training as well.

Several fixes have been proposed. The most commonly adopted approach is adding an auxiliary ``balancing loss'' directly to the training loss penalizing the network parameters during training for inducing imbalanced token allocations \citep{fedus2022switch, lepikhin2021gshard, shazeer2017}. However, as discussed in \citet[Section 2.2]{wang2024auxiliary}, this method interferes with the gradient updates of the performance-focused component of the objective. 

Another approach by \citet{lewis2021base} approximately solves --- via a truncated auction heuristic based on \citet{bertsekas1992auction} --- an integer program that balances the load across experts in every training iteration. However, generating an AI model's outputs for even one single batch of data (a ``forward pass'') requires significant computation time and memory since it requires calculating matrix multiplications and non-linear transformations defined by millions to billions of parameters. During training (as opposed to inference/deployment), there is an additional computational and memory overhead for computing and storing the backpropagated gradients (the ``backward pass''). Thus, it is inadvisable to spend additional time solving a multi-iterative subroutine (whether an auction algorithm or an integer program) for every s-MoE layer and every batch.

To address this problem, DeepSeek's auxiliary-loss-free (ALF-LB) \citep{wang2024auxiliary} procedure augments each expert with a bias $p_k$ using a {\it single-shot} update (as opposed to a multi-step subroutine), nudging tokens toward underloaded experts --- without interfering with training gradients as is the case when using auxiliary balancing losses\footnote{Specifically, the $p_k$ biases in ALF-LB are considered constants during the backpropagation phase of AI training.}. Notably, ALF-LB was used to successfully train the recent DeepSeekV3 \citep{deepseek2024v3} and DeepSeekV4 \citep{deepseekai2026deepseekv4} models.

\subsection{DeepSeek's ALF-LB Algorithm}\label{sec:alf_lb}
DeepSeek's ALF-LB procedure \citep{wang2024auxiliary} is as follows:
\begin{enumerate}
\item For each expert $k=1,\dots,E$, initialize a scalar shift parameter $p_{k}$ to be 0.
\item Perform a forward pass on a batch. During the forward pass, route token $i$ based on the experts with the highest shifted weights $\gamma_{ik}+p_{k}$.
\item Calculate the downstream network loss and update the main network parameters, treating the shifts $\{p_k\}$ as constants.
\item For each expert $k$, update its shift parameter as follows, where $u$ is a small constant (e.g., 0.001):
\begin{equation}
p_{k}\gets\begin{cases}
p_{k}-u & \text{if expert }k\text{ had load }>\ L;\\
p_{k}+u & \text{if expert }k\text{ had load }<\ L;\\
p_{k} & \text{otherwise}.
\end{cases}
\label{eq:alf_lb_update}
\end{equation}
\item Repeat steps 2-4 for each batch of input data. 
\end{enumerate}
In the original publication, \citet{wang2024auxiliary} chose $u=0.001$ and exhibited empirical benefits of this procedure on 1B to 3B parameter DeepSeekMoE models \citep{dai2024deepseekmoe}. Notably, this procedure was subsequently used to train the DeepSeekV3 \citep{deepseek2024v3} and DeepSeekV4 \citep{deepseekai2026deepseekv4} base models.

\subsection{Contributions and Organization of Paper}
Our main contribution is a rigorous theoretical framework for understanding and analyzing the ALF-LB procedure, with specific contributions detailed across different sections. First, in Section \ref{sec:load_balancing_PD}, we cast the ALF-LB procedure as a single-step primal-dual method for an assignment problem, connecting a state-of-the-art heuristic from large-scale AI to the operations research and primal-dual optimization literature for resource allocation such as those in \citet{bertsekas1992auction, bertsekas1998network,bertsekas2008auction}. However, the procedure we analyze differs from the aforementioned operations research problems since, as discussed in Section \ref{sec:intro_balancing}, the computational and memory requirements of performing a forward pass through an AI model do not allow for one to run multi-iterative procedures as subroutines with those forward passes. Instead, s-MoE balancing routines (such as ALF-LB) must be updated in a ``single-shot'' manner --- with computationally-minimal, constant-time updates per forward pass --- instead of relying on multi-iterative subroutines.

Then, in Section \ref{sec:deterministic_analysis}, we analyze this procedure in a stylized deterministic setting and establish several insightful structural properties: (i) a monotonic improvement condition for the Lagrangian objective (Theorem \ref{thm:Ldec}), (ii) a preference rule that moves tokens from overloaded to underloaded experts (Theorem \ref{thm:DSswitching}), and (iii) a band-stability guarantee showing that once expert loads enter an approximate-balance band, they remain there (Theorem \ref{thm:band_stability}). Finally, in Section \ref{sec:oco_analysis}, we extend our analysis to a more realistic online, stochastic setting by establishing a strong convexity property of the expected dual objective (Section \ref{sec:strong_convexity}) and using it to derive a logarithmic regret bound for the ALF-LB procedure (Theorem \ref{thm:logarithmic_regret}).

\subsubsection{Online Resource Allocation: Connections and Related Works}\label{sec:related_work}
It is insightful to compare this paper to another recent line of work at the intersection of AI implementation and operations research: the online resource allocation of multiple requests/queries in AI datacenters (see, for example, \citet{markovic2026robust} and citations therein) where computational requests arrive in an online, stochastic manner and must be optimally routed to servers in the datacenter to complete the job. In comparison, during a forward pass through a multi-layered s-MoE based AI architecture, any layer after an s-MoE layer must wait for all tokens to be routed, passed through assigned experts, and aggregated by the preceding s-MoE layer. For reference, DeepSeekV3 \citep{deepseek2024v3} and DeepSeekV4 \citep{deepseekai2026deepseekv4} both contain 61 s-MoE layers. Thus, unlike the routing of requests to datacenter servers, the allocation of tokens in s-MoE layers must be ``single-shot'' and computationally-minimal in order to not delay the sequential progression of the forward pass through the multi-layered AI architecture.

Another related line of works is \citet{balseiro2020dual, balseiro2021regularized, agrawal2014fast, jenatton2016online} (see \citet[Section 1.2]{balseiro2021regularized} and citations therein) that design and analyze primal-dual methods for solving online resource allocation problems by formulating them as online stochastic convex programs or regularized allocation problems. These prior works utilize dual descent and mirror descent techniques to manage global resource constraints which can be formulated as load balancing. However, the algorithms proposed in those works often require solving auxiliary optimization sub-routines such as linear programs, quadratic programs, or non-trivial projections during their updates. As discussed earlier, in the context of s-MoE training, multi-iterative subroutines are computationally impracticable because routing must occur in every s-MoE layer of the AI architecture during already-computationally-expensive forward passes, which does not allow for the extra overhead of solving auxiliary sub-routines at every s-MoE layer. Thus, in comparison, our paper instead analyzes a ``single-shot'' update framework, built specially to encompass DeepSeek's ALF-LB procedure \citep{wang2024auxiliary}, that updates the load balancing parameters with negligible effect on the speed of the forward pass.

\section{A Primal-Dual Framework for Optimal Load Balancing}\label{sec:load_balancing_PD}
Now, we establish a rigorous mathematical framework auxiliary-loss-free load balancing heuristics for s-MoE layers and, in particular, DeepSeek's ALF-LB method \citep{wang2024auxiliary}. In the remainder of the paper, for simplicity, we will refer to the normalized affinity scores $\gamma_{ik}$ (Equation \ref{eq:affinity}) as the ``affinity scores'' and adopt the convention of using them both for routing and aggregation.

\subsection{Allocation Problem: Integer Program and Relaxation}\label{sec:alloc_prob}

Consider the exact-balancing primal problem for assigning $T$ tokens to $E$ experts. As a starting point, we make the following assumptions and stylizations:
\begin{itemize}
    \item The number of tokens multiplied by the sparsity, $KT$, is exactly divisible by the number of experts $E$, so the perfectly balanced load is $L=KT/E$.
    \item The affinity scores $\gamma_{ik}$ are constant from iteration to iteration\footnote{This is a stylized assumption for the initial analysis in this section only. Later, in Section \ref{sec:oco_analysis}, we will consider the case where the affinity scores are new stochastic realizations from some distribution every iteration.}.
\end{itemize}

Hence, the target load is $L:=KT/E$ and perfect balance is characterized by the solution of the following integer program (IP):
\begin{equation}
\begin{aligned}
\max_{\left\{ x_{ik}\right\} }\  & \sum_{i,k}\gamma_{ik}x_{ik}\\
\text{s.t.} & \sum_{k}x_{ik}=K\quad\forall i=1,\dots,T\\
 & \sum_{i}x_{ik}=L\ \forall k=1,\dots,E\\
 & x_{ik}\in\left\{ 0,1\right\} \quad\forall i,k.
\end{aligned}
\label{eq:alloc_problem}
\end{equation}
In practice, it is typically inadvisable (in terms of both time and memory requirements) to solve an IP for every MoE layer and on each individual batch of data\footnote{One notable exception is the BASE layer heuristic invented by \citet{lewis2021base} which aims to approximately solve the IP using a truncated auction algorithm modeled after \citet{bertsekas1992auction}.}.

Instead, we first relax the IP to a linear program (LP) by replacing the integer constraint $x_{ik}\in\left\{ 0,1\right\}$ with $x_{ik} \in [0,1]$. It is routine to show that the IP and the LP relaxation have the same optimal value. The Lagrangian of the LP relaxation is
\begin{align}
\Lcal(x,y,p) 
&= \sum_{i,k}\gamma_{ik}x_{ik}+\sum_{i}y_{i}\left(K -\sum_{k}x_{ik}\right)+\sum_{k}p_{k}\left(\sum_{i}x_{ik}-L\right) \notag \\
&= \sum_{i,k}\left(\gamma_{ik}+p_{k}-y_{i}\right)x_{ik}+K \sum_{i}y_{i}-L\sum_{k}p_{k}.\label{eq:T1Lagrangian}
\end{align}
The corresponding dual problem is
\begin{align*}
\min_{\left\{ y_{i}\right\} ,\left\{ p_{k}\right\} }\  & K \sum_{i}y_{i}-L\sum_{k}p_{k}\\
\text{s.t. } & y_{i}-p_{k}\geq\gamma_{ik}\quad\forall i,k.
\end{align*}
However, even solving this LP relaxation to completion every iteration would still be too slow and often memory-infeasible.

\subsection{Deriving ALF-LB from Primal-Dual Principles}\label{sec:primal_dual_alflb}
We show that DeepSeek ALF-LB (Section \ref{sec:alf_lb}) can be formulated as a primal-dual procedure that performs a single-shot update per iteration for finding a critical point of the Lagrangian \eqref{eq:T1Lagrangian}. For conciseness, we introduce the following notation for the \textit{load} of the $k$-th expert at iteration $n$: 
\begin{equation}
A_{k}^{\left(n\right)}:=\sum_{i}x_{ik}^{\left(n\right)}.\label{eq:load}
\end{equation}
Indexing each training iteration with $n$, consider the following primal-dual scheme:
\begin{align}
\text{\textbf{Dual Update:} }p_{k}^{\left(n+1\right)} & \gets p_{k}^{\left(n\right)}+\epsilon_{k}^{\left(n\right)}\left(L-A_{k}^{\left(n\right)}\right) & \ \forall\ k.\label{eq:dualupdate}\\
\text{\textbf{Primal Update: }}x_{ik}^{\left(n+1\right)} & \gets\begin{cases}
1 & \text{if }k \in \topkind_{k^{\prime}}\big(\gamma_{ik^{\prime}}^{\left(n+1\right)}+p_{k^{\prime}}^{\left(n+1\right)}\big)\\
0 & \text{otherwise}
\end{cases} & \ \forall\ i,k,\label{eq:primalupdate}
\end{align}
where $\left\{ \epsilon_{k}^{\left(n\right)}\right\}$ are step-sizes and $\topkind_{k^\prime}(\cdot)$ gives the  indices that would induce the $K$-largest arguments. The Primal Update enforces $\sum_{k}x_{ik}=K$. Maximizing $\sum_{i,k}(\gamma_{ik}+p_{k}-y_{i})x_{ik}$ subject to this constraint is equivalent to choosing the top $K$ values of $\gamma_{ik}+p_k$ for each $i$, regardless of $y_i$. Thus we can simplify the Lagrangian by dropping the $y_i$ terms, which gives:
\begin{align}
\Lcal(x,p) & =\sum_{i,k}\left(\gamma_{ik}+p_{k}\right)x_{ik}-L\sum_{k}p_{k}.\label{eq:L}
\end{align}
Within this setup, the original DeepSeek ALF-LB update from \citet{wang2024auxiliary} described in \eqref{eq:alf_lb_update} corresponds to the step-size
\begin{align}
\epsilon_{k}^{\left(n\right)} & =\frac{u}{\left|L-A_{k}^{\left(n\right)}\right|}\ind\left\{\left|L-A_{k}^{(n)}\right|\neq 0\right\}, \quad \textbf{(DeepSeek ALF-LB Step-Size)}\label{eq:ds_step}
\end{align}
where $\ind\left\{...\right\}$ is the indicator function\footnote{For conciseness, we use the short-hand ``$\epsilon_{k}^{\left(n\right)} =\frac{u}{\left|L-A_{k}^{\left(n\right)}\right|}$'' to refer to this step-size in the remainder of this paper.}. Figure \ref{fig:convergence_behavior} illustrates the convergence behavior of this primal-dual scheme during the training of 1B-parameter DeepSeekMoE models \citep{dai2024deepseekmoe} with varying $\epsilon_{k}^{(n)}$ step-size choices. More experimental details are provided in Section \ref{sec:experiments}.

\begin{figure}[h]
    \centering
    \includegraphics[width=\textwidth]{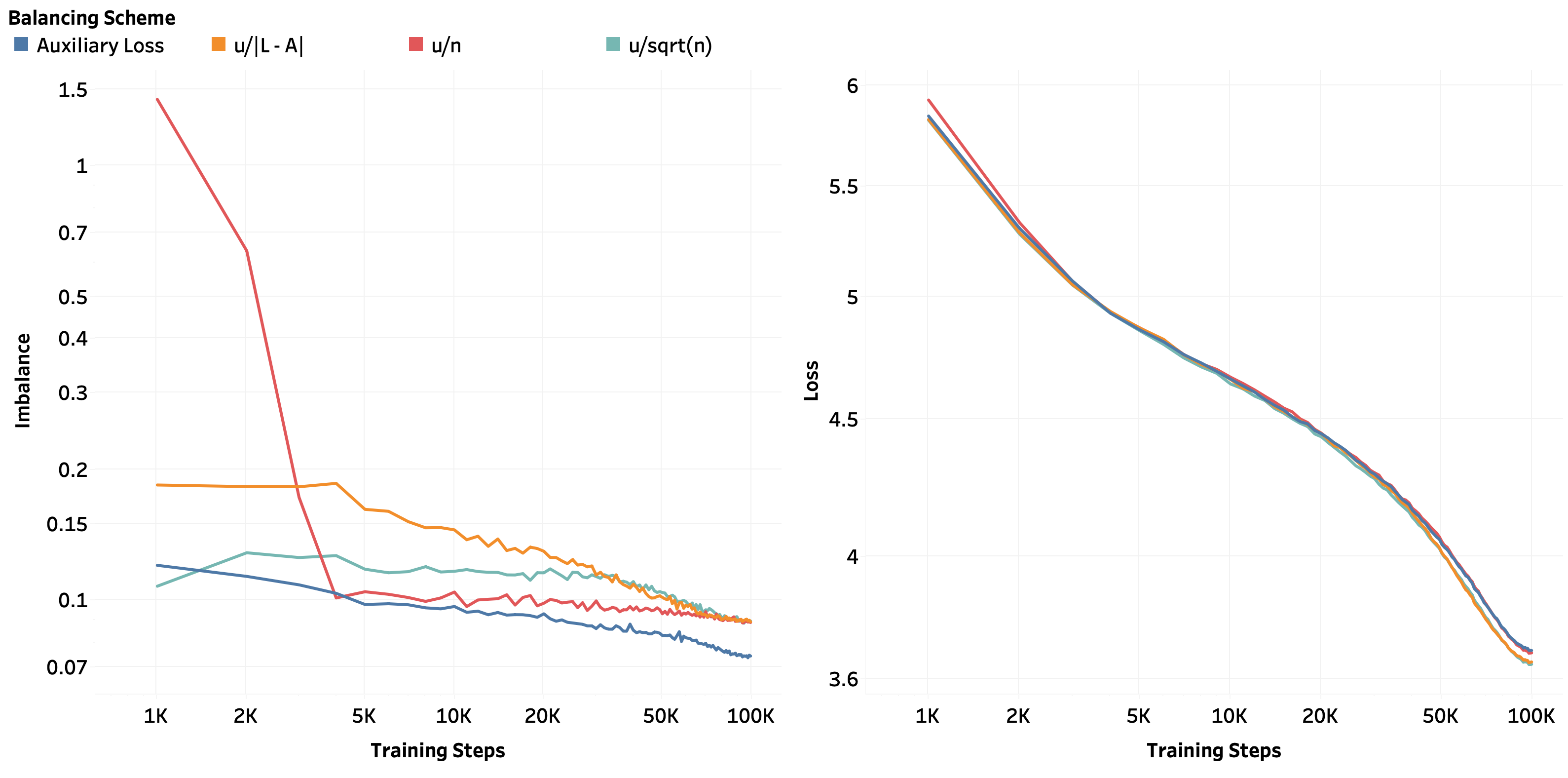}
    \caption{Validation set load imbalance and loss during the training of a 1B-parameter DeepSeekMoE model. Section \ref{sec:experiments} gives experiment details. {\it Left:} We measure the imbalance as the average load deviation from the target load $L=KT/E$ across all experts in the DeepSeekMoE-1B architecture. {\it Right:} We measure the loss on the validation set. }
    \label{fig:convergence_behavior}
\end{figure}

\section{Experimental Setup and Observations}\label{sec:experiments}

\subsection{Experimental Setup}

In all experiments in this paper (Figures \ref{fig:convergence_behavior}-\ref{fig:bias_histograms}), we train 1B-parameter DeepSeekMoE models \citep{dai2024deepseekmoe} for 100K steps on the next-token prediction task on the Salesforce WikiText-103 dataset \citep{merity2016pointer} with the cross-entropy loss. The text data is tokenized using the GPT-2 tokenizer \citep{radford2019language}.

Here, we will provide only a brief description of the DeepSeekMoE architecture for completeness and refer to \citet{dai2024deepseekmoe} for more in-depth details: The DeepSeekMoE architecture follows the paradigmatic practice of stacking decoder-only transformer layers \citep{vaswani2017attention} into a full large language model. In its simplest form, the transformer layer contains several sub-layers --- among them a multi-headed attention sub-layer and a multi-layer perceptron (MLP) sub-layer. For our setting of interest, modern s-MoE architectures \citep{shazeer2017, jiang2024mixtral,dai2024deepseekmoe} replace the MLP sub-layer of each transformer layer with an s-MoE sub-layer described in Sections \ref{sec:setup}-\ref{sec:intro_balancing}, where each parallel expert is typically a separate MLP. Additionally, the DeepSeekMoE architecture \citep{dai2024deepseekmoe} is specifically characterized by its use of ``granular segmentation'' (using narrower experts but increasing the total number of experts) and the inclusion of two ``shared experts'' that are always chosen by the gate\footnote{We will not include the shared experts within the theoretical framework presented in this paper because the shared experts represent a fixed computational load that does not require dynamic balancing. Additionally, omitting the shared experts from our theoretical formulation leads to cleaner and more concise analyses.}.

The architectural parameters of the 1B-parameter DeepSeekMoE models in our experiments are the same as those described in \citet[Table 5]{wang2024auxiliary}. For consistency with \citet{wang2024auxiliary}, we also use $E=64$ experts with sparsity level $K=6$. During training, we optimize all 1B parameters within the transformer backbone and prediction head of the DeepSeekMoE architectures starting from random initializations. Each model was trained on 8xH100/H200 GPUs with a batch size of 64 sequences/batch and 4096 tokens/sequence (so, $T\approx 262K$). To optimize the models, we use the AdamW \citep{loshchilov2018decoupled} optimizer. 

\paragraph{Balancing Schemes.} In our experiments, we compare three choices of the $k$-th expert step-sizes at iteration $n$ (denoted $\epsilon^{(n)}_k$, see Section \ref{sec:primal_dual_alflb}) in the ALF-LB balancing scheme framework. In particular, given some balancing hyperparameter $u$, we compare the following schemes:
\begin{itemize}
\item $\epsilon^{(n)}_k = \frac{u}{\left|L-A^{(n)}_k\right|}$ (Original DeepSeek ALF-LB from \citet{wang2024auxiliary})
\item $\epsilon^{(n)}_k = \frac{u}{n}$
\item $\epsilon^{(n)}_k = \frac{u}{\sqrt{n}}$
\end{itemize}
\noindent Additionally, we include a comparison with a fourth scheme that trains with an auxiliary loss \citep{shazeer2017, lepikhin2021gshard, fedus2022switch,wang2024auxiliary}. We calculate the auxiliary loss with the method described in \citet[Section 2.2]{wang2024auxiliary}. The auxiliary loss is multiplied by a ``trade-off parameter'' that we will, for consistency, also denote by $u$ and then added to the main cross-entropy loss.

\paragraph{Hyperparameter Search.} For each of the four scheduling schemes, we conducted hyperparameter search over the following hyperparameters:
\begin{itemize}
\item balancing constants $u\in\{1e{-}4, 1e{-}3, 1e{-}2, 1e{-}1, 1, 10\}$, 
\item learning rates $\lr\in\{1e{-}5, 1e{-}4, 1e{-}3\}$, and 
\item weight decay $\wdy\in\{0.01, 0.1, 0.001\}$. 
\end{itemize}
Thus, we trained $4\times 6 \times 3 \times 3 = 216$ separate 1B-parameter DeepSeekMoE models to conduct this search. Then, for each of the four scheduling schemes, we select the hyperparameter setting that achieves the best cross-entropy loss on a held-out validation set to be shown in the experimental plots and tables in this paper. We found that 
\begin{itemize}
\item $\lr = 1e{-}4$ and $\wdy=1e{-}1$ consistently led to the best validation loss across all settings;
\item the $u/n$ and auxiliary loss scheduling schemes performed the best with parameter $u=1$; and
\item the $u/|L-A^{(n)}_k|$ and $u/\sqrt{n}$ scheduling schemes performed the best with $u=1e{-}3$.
\end{itemize}

\subsection{Experimental Observations}
We make some interesting empirical observations from our experiments that are of separate interest from the theoretical framework proposed in this paper.

Firstly, we found that, for the original ``constant update'' scheme considered by \cite{wang2024auxiliary} (which corresponds to $u/|L-A^{(n)}_k|$ in our formalization), our hyperparameter search also yielded $u=1e{-}3$ to be the optimal balancing constant, which corroborates the same observation from \citet{wang2024auxiliary}.

Secondly, Table \ref{tab:final_compare} reports the final validation loss and overall imbalance of the different balancing schemes at the end of training. Observe that the $u/\sqrt{n}$ scheme achieves the lowest validation loss (best predictive performance) but the highest imbalance (worst computational efficiency); in contrast, the auxiliary loss approach \citep{shazeer2017, lepikhin2021gshard, fedus2022switch} achieves the lowest imbalance (best computational efficiency) but the highest validation loss (worst predictive performance). The $u/n$ scheme (which we will analyze in Section \ref{sec:oco_analysis} through the lens of online optimization) and \citet{wang2024auxiliary}'s original $u/|L-A^{(n)}_k|$ scheme achieve a balance between validation loss and imbalance --- with $u/n$ achieving slightly better balance and $u/|L-A^{(n)}_k|$ achieving slightly better predictive performance.

\begin{table}[ht]
  \centering
  \caption{Comparison of cross-entropy loss on validation data and overall imbalance at the end of training for different scheduling schemes. Experiment details in Section \ref{sec:experiments}.}
  \label{tab:final_compare}
  \begin{tabular}{lrr}
  \hline
  \textbf{Balancing Scheme} & \textbf{Validation Loss} & \textbf{Overall Imbalance} \\
  \hline
  Auxiliary Loss & 3.68999 & \textbf{0.07443} \\
  DeepSeek's Original ALF-LB $\left(u/|L-A^{(n)}_k|\right)$      & 3.65369 & 0.08928 \\
  $u/n$          & 3.68228 & 0.08893 \\
  $u/\sqrt{n}$   & \textbf{3.64642} & 0.08961 \\
  \hline
  \end{tabular}
\end{table}

\section{Convergence Analysis for the Deterministic Case}\label{sec:deterministic_analysis}

\subsection{Section Assumptions}\label{sec:deterministic_assumptions}
To start, we derive theoretical guarantees for the convergence of the procedure described by \eqref{eq:dualupdate} and \eqref{eq:primalupdate} in a stylistic setting where we {\bf assume here in Section \ref{sec:deterministic_analysis} only} that scores $\gamma_{ik}$ are fixed and deterministic. We will later consider the case where the scores are stochastic in Section \ref{sec:oco_analysis}. Additionally, since the scores $\gamma_{ik}$ are the output of softmax transformations (Section \ref{sec:setup}), we assume $\gamma_{ik}\in(0,1)$ for all $i,k$.

\subsection{Monotonicity of the Lagrangian}
Towards showing the convergence of this procedure, we will show a monotonic improvement condition for the Lagrangian.

\subsubsection{Lagrangian Characterization}

We start by characterizing the change in the Lagrangian in each iteration. Two helpful abstractions are the \textit{assignment set}
\begin{equation}\label{eq:assignment_set}
  \alpha_n(i) := \topkind_{k^{\prime}}\!\left(\gamma_{ik^{\prime}}+p_{k^{\prime}}^{\left(n\right)}\right),
\end{equation}
which gives the set of indices of the Top-$K$ experts assigned to token $i$ at iteration $n$; and the \textit{switching benefit} of token $i$ 
\begin{equation}
  b^{\left(n+1\right)}(i)
  := \sum_{k\in\alpha_{n+1}(i)} \left(\gamma_{ik}+p_{k}^{\left(n+1\right)}\right)
   - \sum_{k\in\alpha_{n}(i)} \left(\gamma_{ik}+p_{k}^{\left(n+1\right)}\right),
\label{eq:benefit}
\end{equation}
which captures the gain in the Lagrangian-affinity term when token $i$ replaces its Top-$K$ set $\alpha_n(i)$ by $\alpha_{n+1}(i)$ with respect to the fixed dual variables at iteration $n{+}1$. Since $\alpha_{n+1}(i)$ is chosen as a Top-$K$ set with respect to $\gamma_{ik}+p_k^{(n+1)}$, we have $b^{\left(n+1\right)}(i) \ge 0$ for all $i$.

\begin{thm}\label{thm:Ldec}\textbf{(Change in Lagrangian)} 
Under the assumptions in Section \ref{sec:deterministic_assumptions} and using the procedure described in Steps \eqref{eq:dualupdate}-\eqref{eq:primalupdate}, the following holds for the Lagrangian \eqref{eq:L}:
\[
\Lcal\left(x^{\left(n+1\right)},p^{\left(n+1\right)}\right)-\Lcal\left(x^{\left(n\right)},p^{\left(n\right)}\right)=\sum_{i}b^{\left(n+1\right)}\left(i\right)-\sum_{k}\epsilon_{k}^{\left(n\right)}\left(A_{k}^{\left(n\right)}-L\right)^{2}.
\]
\end{thm}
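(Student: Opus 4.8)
The plan is to decompose the change in the Lagrangian $\Lcal(x,p)=\sum_{i,k}(\gamma_{ik}+p_k)x_{ik}-L\sum_k p_k$ into a ``primal part'' (change due to reassignment of tokens at fixed dual variables $p^{(n+1)}$) and a ``dual part'' (change due to updating $p^{(n)}\to p^{(n+1)}$ at the old primal assignment $x^{(n)}$). Concretely, I would write
\[
\Lcal(x^{(n+1)},p^{(n+1)})-\Lcal(x^{(n)},p^{(n)})
=\underbrace{\big[\Lcal(x^{(n+1)},p^{(n+1)})-\Lcal(x^{(n)},p^{(n+1)})\big]}_{\text{primal part}}
+\underbrace{\big[\Lcal(x^{(n)},p^{(n+1)})-\Lcal(x^{(n)},p^{(n)})\big]}_{\text{dual part}}.
\]

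For the primal part: since $K=1$, each token contributes a single term, and $x^{(n)}$ assigns token $i$ to $\alpha_n(i)$ while $x^{(n+1)}$ assigns it to $\alpha_{n+1}(i)$; the $-L\sum_k p_k^{(n+1)}$ term is identical in both Lagrangians and cancels. Hence the primal part equals $\sum_i\big[(\gamma_{i\alpha_{n+1}(i)}+p_{\alpha_{n+1}(i)}^{(n+1)})-(\gamma_{i\alpha_n(i)}+p_{\alpha_n(i)}^{(n+1)})\big]=\sum_i b^{(n+1)}(i)$ by the definition \eqref{eq:benefit} of the switching benefit. This is essentially bookkeeping.

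For the dual part: here $x^{(n)}$ is held fixed, so $\sum_{i,k}\gamma_{ik}x_{ik}^{(n)}$ cancels and we are left with $\sum_k (p_k^{(n+1)}-p_k^{(n)})\big(\sum_i x_{ik}^{(n)}-L\big)=\sum_k (p_k^{(n+1)}-p_k^{(n)})(A_k^{(n)}-L)$. Now substitute the dual update \eqref{eq:dualupdate}, $p_k^{(n+1)}-p_k^{(n)}=\epsilon_k^{(n)}(L-A_k^{(n)})$, to obtain $\sum_k \epsilon_k^{(n)}(L-A_k^{(n)})(A_k^{(n)}-L)=-\sum_k \epsilon_k^{(n)}(A_k^{(n)}-L)^2$. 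Adding the two parts gives exactly the claimed identity.

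I do not anticipate a genuine obstacle here; the only point requiring care is making sure the telescoping/cancellation of the $-L\sum_k p_k$ term and the $\gamma$-term is done against the \emph{correct} intermediate Lagrangian (in the primal step both Lagrangians use $p^{(n+1)}$, in the dual step both use $x^{(n)}$), and that the sign conventions in \eqref{eq:benefit} and \eqref{eq:dualupdate} are tracked consistently so the quadratic term comes out negative. A secondary check is that for $K=1$ the Top-$K$ primal update \eqref{eq:primalupdate} really does reduce to the $\arg\max$ defining $\alpha_{n+1}$, which is immediate.
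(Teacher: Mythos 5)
Your proposal is correct and is essentially the paper's proof: the decomposition through the intermediate Lagrangian $\Lcal(x^{(n)},p^{(n+1)})$ is exactly the paper's add-and-subtract of $\gamma_{i\alpha_n(i)}+p_{\alpha_n(i)}^{(n+1)}$, yielding the same switching-benefit term and the same dual term $\sum_k \epsilon_k^{(n)}(L-A_k^{(n)})(A_k^{(n)}-L)=-\sum_k \epsilon_k^{(n)}(A_k^{(n)}-L)^2$. The only difference is cosmetic bookkeeping (you group the $-L\sum_k p_k$ change with the dual part immediately, while the paper carries it to the end and converts $\sum_i$ to $\sum_k A_k^{(n)}$).
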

\begin{proof}
Using the Lagrangian \eqref{eq:L} definition, we have:
\begin{multline*}
\Lcal\left(x^{\left(n+1\right)},p^{\left(n+1\right)}\right)-\Lcal\left(x^{\left(n\right)},p^{\left(n\right)}\right) \\ = \sum_{i,k}\left(\gamma_{ik}+p_{k}^{\left(n+1\right)}\right)x_{ik}^{\left(n+1\right)} - \sum_{i,k}\left(\gamma_{ik}+p_{k}^{\left(n\right)}\right)x_{ik}^{\left(n\right)}
- L\sum_k (p_k^{(n+1)} - p_k^{(n)}) \\
= \sum_i \left[\sum_{k\in\alpha_{n+1}(i)}\left(\gamma_{ik}+p_{k}^{(n+1)}\right) - \sum_{k\in\alpha_n(i)}\left(\gamma_{ik}+p_{k}^{(n)}\right)\right] - L\sum_k \left(p_k^{(n+1)} - p_k^{(n)}\right) \\
\text{by assignment set definition \eqref{eq:assignment_set}}
\\
= \sum_i \left[ b^{(n+1)}(i) + \sum_{k\in\alpha_n(i)}\left(\gamma_{ik}+p_{k}^{(n+1)}\right) - \sum_{k\in\alpha_n(i)}\left(\gamma_{ik}+p_{k}^{(n)}\right) \right] - L\sum_k \left(p_k^{(n+1)} - p_k^{(n)}\right) \\
\text{by switching benefit definition \eqref{eq:benefit}}
\\
= \sum_i b^{(n+1)}(i) + \sum_i \sum_{k\in\alpha_n(i)}\left(p_k^{(n+1)} - p_k^{(n)}\right) - L\sum_k \left(p_k^{(n+1)} - p_k^{(n)}\right) \\
= \sum_i b^{(n+1)}(i) + \sum_k A_k^{(n)}\left(p_k^{(n+1)} - p_k^{(n)}\right) - L\sum_k \left(p_k^{(n+1)} - p_k^{(n)}\right) \\
\text{by expert load definition \eqref{eq:load}}
\\
= \sum_i b^{(n+1)}(i) + \sum_k \left(A_k^{(n)}-L\right)\left(p_k^{(n+1)} - p_k^{(n)}\right) \\
= \sum_i b^{(n+1)}(i) - \sum_k \epsilon_k^{(n)}\left(A_k^{(n)}-L\right)^2\\
\text{by step \eqref{eq:dualupdate} definition}.
\end{multline*}
This completes the proof.
\end{proof}

Thus, Theorem \ref{thm:Ldec} shows that the improvement in the Lagrangian is the difference between the total switching benefit and the squared sum of load imbalances weighted by step-sizes). 

In fact, we can further characterize the switching benefit in terms of the expert choice changes between iterations. Specifically, define the sets of removed and newly selected experts, respectively, for some token~$i$ between iterations $n+1$ and $n$:
\[
\rmvinds^{(n+1)}(i):=\alpha_n(i)\setminus\alpha_{n+1}(i),\qquad
\addinds^{(n+1)}(i):=\alpha_{n+1}(i)\setminus\alpha_n(i).
\]
We will denote the corresponding count of changed assignments as
\[
|\newinds^{(n+1)}(i)|:=|\rmvinds^{(n+1)}(i)|=|\addinds^{(n+1)}(i)|,
\]
where the index sets are necessarily equal-sized since we always select the top-$K$ experts. As such, we can define an arbitrary bijection $\bijection_i^{(n)}$ between $\addinds^{(n+1)}(i)$ and $\rmvinds^{(n+1)}(i)$ and define a set of entering-exiting pairs relative to $\bijection_i^{(n)}$:
\begin{equation*}
\allpairs_i^{(n)} := \left\{(k^+,k^-) \in \addinds^{(n+1)}(i) \times \rmvinds^{(n+1)}(i) \mid k^+ = \bijection_i^{(n)}(k^-)\right\}.
\end{equation*}
We can then characterize the switching benefit using $\allpairs_i^{(n)}$.
\begin{prop}\textbf{(Switching Benefit Decomposition)}\label{prop:pairwise_switching}
Assume the setting in Section \ref{sec:deterministic_assumptions} using the procedure described in Steps \eqref{eq:dualupdate}-\eqref{eq:primalupdate} as well as that there are no ties in bias-shifted scores $\gamma_{ik}+p_k^{(n)}$. Then, for a fixed token $i$ at iteration $n$, we can write the switching benefit as
\begin{equation}\label{eq:pairwise_benefit}
  b_i^{(n+1)}
  = \sum_{(k^+,k^-) \in \allpairs_i^{(n)}}
      \left[\left(\gamma_{ik^+}+p_{k^+}^{(n+1)}\right)
           - \left(\gamma_{ik^-}+p_{k^-}^{(n+1)}\right)\right].
\end{equation}
\end{prop}
\begin{proof}
By definition of $b_i^{(n+1)}$ (Equation \eqref{eq:benefit}),
\begin{align*}
  b_i^{(n+1)}
  & = \sum_{k\in\alpha_{n+1}(i)}\left(\gamma_{ik}+p_k^{(n+1)}\right)
    - \sum_{k\in\alpha_n(i)}\left(\gamma_{ik}+p_k^{(n+1)}\right)\\
  & = \sum_{k\in \addinds^{(n+1)}(i)}\left(\gamma_{ik}+p_k^{(n+1)}\right)
    - \sum_{k\in \rmvinds^{(n+1)}(i)}\left(\gamma_{ik}+p_k^{(n+1)}\right)\\
  & = \sum_{(k^+,k^-) \in \allpairs_i^{(n)}}
      \left[\left(\gamma_{i{k^+}}+p_{k^+}^{(n+1)}\right)
       - \left(\gamma_{ik^-}+p_{k^-}^{(n+1)}\right)\right].
\end{align*}
This proves the desired result.
\end{proof}

While Theorems \ref{thm:Ldec} and Proposition \ref{prop:pairwise_switching} apply to arbitrary step-size $\epsilon_k^{(n)}$ choices, additional insights can be derived when specializing to the \cite{wang2024auxiliary}'s original step-size choice.

\subsection{Analysis of DeepSeek's Original Step-Size Choice}

Under \cite{wang2024auxiliary}'s original step-size choice (Equations \eqref{eq:alf_lb_update} and \eqref{eq:ds_step}), we derive more precise behaviors for ALF-LB in our theoretical setting.

To start, we can infer the following bounds on differences of bias-shifted score for entering-exiting pairs that will be useful in the subsequent analysis. Note that we will use the $\sign(\cdot): \mathbb{R} \to \{-1,0,1\}$ function with the convention $\sign(0)=0$.

\begin{lem}\textbf{(Pairwise Bounds)}\label{lem:pairwise_bounds}
  Assume the setting in Section \ref{sec:deterministic_assumptions} using the procedure described in Steps \eqref{eq:dualupdate}-\eqref{eq:primalupdate} with DeepSeek's original step-size \eqref{eq:ds_step} as well as that there are no ties in bias-shifted scores $\gamma_{ik}+p_k^{(n)}$. Then, for a fixed token $i$, the following hold between iterations $n$ and $n+1$:
\begin{enumerate}[label=\alph*.]
\item Every pair $(k^+,k^-)\in \addinds^{(n+1)}(i)\times \rmvinds^{(n+1)}(i)$ satisfies
\begin{align}
  0 &< \left(\gamma_{ik^+}+p_{k^+}^{(n+1)}\right) - \left(\gamma_{ik^-}+p_{k^-}^{(n+1)}\right)
     < u\left(\sign\!\left(L-A_{k^+}^{(n)}\right)-\sign\!\left(L-A_{k^-}^{(n)}\right)\right)\label{eq:pairwise_bound1}\\
     &< 2u,\label{eq:pairwise_bound2}
\end{align}
and
\begin{equation}\label{eq:neg_pairwise_bound}
  -2u<\gamma_{ik^+}+p_{k^+}^{(n)}-\left(\gamma_{ik^-}+p_{k^-}^{(n)}\right)<0.
\end{equation}
\item If $\alpha_{n+1}(i)\neq \alpha_n(i)$, then the switching benefit of token $i$ is bounded by the number of changed assignments:
\[
  0<b_{i}^{\left(n+1\right)}<2u\,|\newinds^{(n+1)}(i)|.
\]
If $\alpha_{n+1}(i) = \alpha_n(i)$, then $b_i^{(n+1)} = 0$.
\end{enumerate}
\end{lem}
\begin{proof}
Fix any pair $(k^+,k^-)\in \addinds^{(n+1)}(i)\times \rmvinds^{(n+1)}(i)$. By definition, $k^-\in\alpha_n(i)$ and $k^+\notin\alpha_n(i)$, so
\begin{equation}\label{eq:iter_n_choice}
  \gamma_{ik^+}+p_{k^+}^{(n)} - \gamma_{ik^-}+p_{k^-}^{(n)} < 0.
\end{equation}
Similarly, $k^+\in\alpha_{n+1}(i)$ and $k^-\notin\alpha_{n+1}(i)$, so
\begin{equation}\label{eq:iter_next_choice}
  0
  < \left(\gamma_{ik^+}+p_{k^+}^{(n+1)}\right) - \left(\gamma_{ik^-}+p_{k^-}^{(n+1)}\right).
\end{equation}
Denote $s_k^{(n)} := \sign\!\left(L-A_k^{(n)}\right)$. Using the update \eqref{eq:alf_lb_update}, we can rewrite
\begin{equation}\label{eq:iter_nnext_choice}
  \left(\gamma_{ik^+}+p_{k^+}^{(n+1)}\right) - \left(\gamma_{ik^-}+p_{k^-}^{(n+1)}\right)
  = \left(\gamma_{ik^+}+p_{k^+}^{(n)} - \gamma_{ik^-}-p_{k^-}^{(n)}\right)
    + u\left(s_{k^+}^{(n)}-s_{k^-}^{(n)}\right).
\end{equation}
Combining \eqref{eq:iter_nnext_choice} with \eqref{eq:iter_n_choice} gives \eqref{eq:pairwise_bound1}. 

The inequality \eqref{eq:pairwise_bound2} follows since $s_{k^\pm}^{(n)}\in\{-1,0,1\}$, so sign differences are at most 2.

Next, rearranging \eqref{eq:iter_nnext_choice} gives
\[
  \gamma_{ik^+}+p_{k^+}^{(n)}-\left(\gamma_{ik^-}+p_{k^-}^{(n)}\right)
  = \left(\gamma_{ik^+}+p_{k^+}^{(n+1)}\right) - \left(\gamma_{ik^-}+p_{k^-}^{(n+1)}\right)
    - u\left(s_{k^+}^{(n)}-s_{k^-}^{(n)}\right),
\]
and using \eqref{eq:iter_next_choice} together and again bounding sign differences by 2 yields \eqref{eq:neg_pairwise_bound}.

Finally, if $\alpha_{n+1}(i)\neq \alpha_n(i)$, then $|\newinds^{(n+1)}(i)|=|\allpairs_i^{(n)}|>0$. By Proposition~\ref{prop:pairwise_switching}, $b_i^{(n+1)}$ is a sum of $|\newinds^{(n+1)}(i)|$ differences of the form
\[
  \left(\gamma_{ik^+}+p_{k^+}^{(n+1)}\right) - \left(\gamma_{ik^-}+p_{k^-}^{(n+1)}\right),
\]
and by Item (a) each such term lies in $(0,2u)$. Summing gives $0<b_i^{(n+1)}<2u\,|\newinds^{(n+1)}(i)|$. 
If $\alpha_{n+1}(i) = \alpha_n(i)$, then $b_i^{(n+1)} = 0$ trivially by definition. This proves Item (b).
\end{proof}

One immediate consequence of Proposition \ref{prop:pairwise_switching} and Lemma \ref{lem:pairwise_bounds} is a more precise characterization of the Lagrangian change under DeepSeek's original step-size.

\begin{thm} \label{thm:LdecBound}
  \textbf{(Lagrangian with DeepSeek Step-Size)} Consider the setting in Section \ref{sec:deterministic_assumptions} using the procedure described in Steps \eqref{eq:dualupdate}-\eqref{eq:primalupdate} with step-size DeepSeek's original step-size \eqref{eq:ds_step} as well as that there are no ties in bias-shifted scores $\gamma_{ik}+p_k^{(n)}$. Then, the Lagrangian change in Theorem \ref{thm:Ldec} simplifies to
\begin{equation}\label{eq:Ldec_deepseek}
\Lcal\left(x^{\left(n+1\right)},p^{\left(n+1\right)}\right)-\Lcal\left(x^{\left(n\right)},p^{\left(n\right)}\right)=\sum_{i}b^{\left(n+1\right)}\left(i\right)-u\sum_{k}\left|A_{k}^{\left(n\right)}-L\right|.
\end{equation}
Furthermore, let $\Scal^{\left(n+1\right)}$ denote the index set of tokens whose assignment sets changed between iterations $(n+1)$ and $n$.
Then,
\begin{equation}\label{eq:Ldec_deepseek_bound}
\Lcal\left(x^{\left(n+1\right)},p^{\left(n+1\right)}\right)-\Lcal\left(x^{\left(n\right)},p^{\left(n\right)}\right)
\;\le\; u \left[2\sum_{i\in\Scal^{(n+1)}} |\newinds^{(n+1)}(i)|-\sum_{k}\left|A_{k}^{\left(n\right)}-L\right|\right],
\end{equation}
with strict inequality whenever $\Scal^{(n+1)}\neq\emptyset$.
\end{thm}
\begin{proof}
Substituting step-size \eqref{eq:ds_step} into Theorem~\ref{thm:Ldec} yields \eqref{eq:Ldec_deepseek}.

Next, Lemma~\ref{lem:pairwise_bounds}b states that $0<b_i^{(n+1)}<2u\,|\newinds^{(n+1)}(i)|$ whenever $i\in\Scal^{(n+1)}$ and $b_i^{(n+1)}=0$ otherwise. Summing over $i$ gives
\[
  \sum_i b_i^{(n+1)} \le 2u \sum_{i\in\Scal^{(n+1)}} |\newinds^{(n+1)}(i)|,
\]
with strict inequality if $\Scal^{(n+1)}\neq\emptyset$. Substituting this upper bound into \eqref{eq:Ldec_deepseek} yields \eqref{eq:Ldec_deepseek_bound}.
\end{proof}

Additionally, we can derive interesting implications for when a token's assignment set changes. 
\begin{thm}\label{thm:DSswitching}
  \textbf{(Assignment Change Implications)} Assume the setting of Theorem \ref{thm:LdecBound}. Then, for any entering-exiting index pair $(k^+,k^-)\in \addinds^{(n+1)}(i)\times \rmvinds^{(n+1)}(i)$,
\[
\sign\!\left(L-A_{k^+}^{(n)}\right) > \sign\!\left(L-A_{k^-}^{(n)}\right).
\]
In other words, entering experts are strictly lower in the ordering 
\[\textrm{Overloaded}\succ\textrm{Balanced}\succ\textrm{Underloaded}
\]
than exiting experts.
\end{thm}
\begin{proof}
By Lemma~\ref{lem:pairwise_bounds}a, we have
\[
  0
  < \left(\gamma_{ik^+}+p_{k^+}^{(n+1)}\right) - \left(\gamma_{ik^-}+p_{k^-}^{(n+1)}\right)
  < u\left(\sign\!\left(L-A_{k^+}^{(n)}\right)-\sign\!\left(L-A_{k^-}^{(n)}\right)\right).
\]
The result follows after recalling $u > 0$.
\end{proof}

Next, we show that, if all experts' qualitative state (overloaded vs. balanced vs. underloaded) do not change between iterations, the Lagrangian cannot increase and, in fact, strictly decreases whenever there is any imbalanced experts.
\begin{thm}\textbf{(Lagrangian Monotonicity)}
\label{thm:LdecK} 
Assume the setting of Theorem \ref{thm:LdecBound}. Consider the index sets of overloaded, balanced, and underloaded experts, respectively, at iteration $n$:
\[
  \ovrload^{(n)} := \{k : A_k^{(n)} > L\},\qquad
  \balload^{(n)} := \{k : A_k^{(n)} = L\},\qquad
  \udrload^{(n)} := \{k : A_k^{(n)} < L\}.
\]
Assume the imbalance states stay unchanged between iterations $n$ and $n+1$:
\[
  \ovrload^{(n+1)} = \ovrload^{(n)}\quad\text{and}\quad \udrload^{(n+1)} = \udrload^{(n)}.
\]
(So, necessarily, $\balload^{(n+1)} = \balload^{(n)}$ as well.) Then,
\[
\Lcal\left(x^{\left(n+1\right)},p^{\left(n+1\right)}\right)-\Lcal\left(x^{\left(n\right)},p^{\left(n\right)}\right)\le 0,
\]
with strict inequality whenever there exists any imbalanced expert.
\end{thm}
\begin{proof}
For conciseness, let $s_k^{(n)} := \sign\!\left(L-A_k^{(n)}\right)$. Combining Proposition~\ref{prop:pairwise_switching} and Lemma~\ref{lem:pairwise_bounds}a yields
\[
  b_i^{(n+1)}
  \leq \sum_{(k^+_i,k^-_i) \in \allpairs_i^{(n)}}
  u\left(s_{{k_i}^+}^{(n)}-s_{{k_i}^-}^{(n)}\right).
\]
Theorem \ref{thm:DSswitching} implies $\left(s_{{k_i}^+}^{(n)}-s_{{k_i}^-}^{(n)}\right) > 0$, so equality holds if and only if $\allpairs_i^{(n)}=\emptyset$ i.e. when there are no changes in the assignment set for token $i$ between iterations $n$ and $n+1$.

Summing over all tokens $i$ yields
\begin{equation}\label{eq:b_sum}
  \sum_i b^{(n+1)}(i)
  \;\le\;
  u\sum_i\sum_{(k_i^+,k_i^-) \in \allpairs_i^{(n)}} \left(s_{{k_i}^+}^{(n)}-s_{{k_i}^-}^{(n)}\right),
\end{equation}
where equality holds if and only if $\allpairs_i^{(n)}=\emptyset$ for all $i$.

Consider any individual expert $k$. Every time expert $k$ appears as an entering expert ($k=k_i^+$) for some token $i$, it contributes $+s_{k}^{(n)}$ to the RHS of \eqref{eq:b_sum}. Correspondingly, every time expert $k$ appears as an exiting expert ($k=k_i^-$), it contributes $-s_k^{(n)}$ to the RHS of \eqref{eq:b_sum}. Thus, the total contribution of each individual expert $k$ to the RHS of \eqref{eq:b_sum} is
\[
  s_k^{(n)}\sum_i\left(x_{ik}^{(n+1)}-x_{ik}^{(n)}\right)
  = s_k^{(n)}\left(A_k^{(n+1)}-A_k^{(n)}\right),
\]
where the equality follows from the definition \eqref{eq:load}. Summing over $k$ allows us to rewrite \eqref{eq:b_sum} as
\begin{equation}\label{eq:b_sum2}
  \sum_i b^{(n+1)}(i)
  \;\le\;
  u\sum_{k=1}^E s_k^{(n)}\left(A_k^{(n+1)}-A_k^{(n)}\right).
\end{equation}
Observe,
\begin{itemize}
  \item Any balanced expert $k \in \balload^{(n)} = \balload^{(n+1)}$ satisfies $A_k^{(n+1)}=A_k^{(n)}=L$. So, they contribute $0$ to the RHS sum in \eqref{eq:b_sum2}.
  \item Any overloaded expert $k \in \ovrload^{(n)} = \ovrload^{(n+1)}$ has $s_k^{(n)}=-1$ by definition. Theorem \ref{thm:DSswitching} implies that changes in any token's assignment set can not have an overloaded expert as the entering expert, so $A_k^{(n+1)} \le A_k^{(n)}$ for that overloaded expert.
  \item Any underloaded expert $k \in \udrload^{(n)} = \udrload^{(n+1)}$ has $s_k^{(n)}=1$ by definition. Similarly, Theorem \ref{thm:DSswitching} implies $A_k^{(n+1)} \ge A_k^{(n)}$ for that underloaded expert.
  \item Every token is assigned to $K$ experts, so the total load is conserved: $\sum_k A_k^{(n+1)}=\sum_k A_k^{(n)} = KT$. Thus, the total gain of underloaded experts equals the total loss of overloaded experts:
  \[
    \sum_{k\in \udrload^{(n)}} \left(A_k^{(n+1)}-A_k^{(n)}\right)
    = \sum_{k\in \ovrload^{(n)}} \left(A_k^{(n)} - A_k^{(n+1)}\right).
  \]
\end{itemize}
Thus, we can rewrite the sum in \eqref{eq:b_sum2} in terms of just the overloaded experts:
\begin{equation}\label{eq:only_overload}
  \sum_{k=1}^E s_k^{(n)}\left(A_k^{(n+1)}-A_k^{(n)}\right)
  = 2\sum_{k\in \ovrload^{(n)}}\left(A_k^{(n)}-A_k^{(n+1)}\right).
\end{equation}
For $k\in \ovrload^{(n)} = \ovrload^{(n+1)}$, we have $A_k^{(n+1)}\ge L+1$ by definition. So,
\[
  A_k^{(n)}-A_k^{(n+1)}
  \le A_k^{(n)}-(L+1)
  = \left(A_k^{(n)}-L\right)-1.
\]
Summing over $k\in \ovrload^{(n)}$ gives
\begin{equation}\label{eq:only_overload2}
  \sum_{k\in \ovrload^{(n)}}\left(A_k^{(n)}-A_k^{(n+1)}\right)
  \le \sum_{k\in \ovrload^{(n)}}\left(A_k^{(n)}-L\right) - |\ovrload^{(n)}|.
\end{equation}
Moreover, since total load is conserved and $L = KT/E$ by definition,
\begin{align*}
  0 &= \sum_k \left(A_k^{(n)} - L\right) \\
  &= \sum_{k\in \ovrload^{(n)}} \underbrace{\left(A_k^{(n)} - L\right)}_{>0} + \sum_{k\in \udrload^{(n)}} \underbrace{\left(A_k^{(n)} - L\right)}_{<0} + \sum_{k\in \balload^{(n)}} \underbrace{\left(A_k^{(n)} - L\right)}_{=0}\\
  &= \sum_{k\in \ovrload^{(n)}} \left|A_k^{(n)} - L\right| - \sum_{k\in \udrload^{(n)}} \left|A_k^{(n)} - L\right|.
\end{align*}
From this, we can deduce
\begin{equation}\label{eq:half_abs}
  \sum_{k\in \ovrload^{(n)}}\left(A_k^{(n)}-L\right)
  = \tfrac12\sum_k\left|A_k^{(n)}-L\right|.
\end{equation}
Combining \eqref{eq:b_sum2}, \eqref{eq:only_overload}, \eqref{eq:only_overload2}, and \eqref{eq:half_abs} gives
\[
  \sum_i b^{(n+1)}(i)
  \;\le\;
  u\left(\sum_k\left|A_k^{(n)}-L\right| - 2|\ovrload^{(n)}|\right).
\]
Substituting this bound into Theorem~\ref{thm:LdecBound}, yields
\[
  \Lcal\left(x^{(n+1)},p^{(n+1)}\right)-\Lcal\left(x^{(n)},p^{(n)}\right)
  \le -2u\,|\ovrload^{(n)}|.
\]
Observe the the LHS is strictly negative whenever there exists any imbalanced experts.
\end{proof}

\subsubsection{Gap Analysis and Maximum Token Movement}

Within the same iteration, we can consider the gap between the scores or biases of two different experts for a given token. In particular,for some token $i$ and experts $k$ and $k'$, define {\it score gap} as 
\[
\gap\gamma_{k^{\prime}k}^{i} := \gamma_{ik^{\prime}}-\gamma_{ik}.
\]
Similarly, for two experts $k$ and $k'$, define the {\it bias gap} as 
\begin{equation}\label{eq:bias_gap}
\gap p_{kk^{\prime}} := p_{k} - p_{k^{\prime}}.
\end{equation}

\begin{lem} \label{lem:ijswitch}  
  Assume the setting of Theorem \ref{thm:LdecBound} and that there are also no ties between the scores $\left\{\gamma_{ik}^{(n)}\right\}_{k=1}^E$ and biases $\left\{p_k^{(n)}\right\}_{k=1}^E$ themselves. Suppose two tokens $i$ and $j$ both remove the same expert $k^-$ and add the same expert $k^+$ between iterations $n$ and $n+1$, i.e., $k^-\in \rmvinds^{(n+1)}(i)\cap \rmvinds^{(n+1)}(j)$ and $k^+\in \addinds^{(n+1)}(i)\cap \addinds^{(n+1)}(j)$. Then, their score gaps relative to those experts satisfy
\[
\left|\gap\gamma_{k^+k^-}^{i}-\gap\gamma_{k^+k^-}^{j}\right|<2u.
\]
\end{lem}
\begin{proof}
Since $k^-\in \rmvinds^{(n+1)}(i)$ and $k^+\in \addinds^{(n+1)}(i)$, Lemma~\ref{lem:pairwise_bounds}a implies
\[
  -2u < \gamma_{ik^+}+p_{k^+}^{(n)}-\big(\gamma_{ik^-}+p_{k^-}^{(n)}\big) < 0.
\]
Writing $\gap\gamma_{k^+k^-}^{i} = \gamma_{ik^+}-\gamma_{ik^-}$ and $\gap p_{k^-k^+}^{(n)} = p_{k^-}^{(n)}-p_{k^+}^{(n)}$, this inequality is equivalent to
\[
  \gap p_{k^-k^+}^{(n)} - 2u < \gap\gamma_{k^+k^-}^{i} < \gap p_{k^-k^+}^{(n)}.
\]
The same interval constraint holds for token $j$. Since the upper and lower bounds are independent of $i,j$ and the interval has length at most $2u$, the result follows.
\end{proof}
Lemma \ref{lem:ijswitch} leads to an interesting implication: If we choose the step parameter $u$ to be smaller than half the minimum difference between any two \emph{distinct} score gaps, i.e.,
\[
  u < \bar{u}
  \quad\text{where}\quad
  \bar{u} := \frac{1}{2}\min_{k\ne k^{\prime}}\;\min_{i\ne j}\left|\gap\gamma_{kk^{\prime}}^{i}-\gap\gamma_{kk^{\prime}}^{j}\right|,
\]
and assume $\bar{u}>0$, then movements between entering-exiting pairs of experts become unique to a particular token in any two consecutive iterations.

\begin{prop}
\label{prop:ubar} \textbf{(Uniqueness of Token Movements)} Assume the setting of Theorem \ref{thm:LdecBound} with step-length $u<\bar{u}$ and where there are also no ties between the scores $\left\{\gamma_{ik}^{(n)}\right\}$ and biases $\left\{p_k^{(n)}\right\}$ themselves. Consider the update from iteration $n$ to $n+1$ and some fixed pair of experts $(k^-,k^+)$. Then, at most one token can simultaneously remove expert $k^-$ and add expert $k^+$. As a consequence, an expert's load cannot change by more than $(E-1)$ tokens between two consecutive iterations.
\end{prop}
\begin{proof}
Suppose, for contradiction, that between some iterations $n$ and $n+1$ two distinct tokens $i\neq j$ both remove the same expert $k^-$ and add the same expert $k^+$. Then Lemma~\ref{lem:ijswitch} implies
\[
  \big|\gap\gamma_{k^+k^-}^{i}-\gap\gamma_{k^+k^-}^{j}\big| < 2u.
\]
By the definition of $\bar{u}$, however,
\[
  \big|\gap\gamma_{k^+k^-}^{i}-\gap\gamma_{k^+k^-}^{j}\big| \ge 2\bar{u} > 2u,
\]
which is a contradiction. Hence, for each ordered pair $(k^-,k^+)$, at most one token can remove $k^-$ and add $k^+$ between iterations $n$ and $n+1$. There are $E-1$ possible origins (or destinations) for any fixed expert, so its load can increase or decrease by at most $(E-1)$ tokens in a single iteration.
\end{proof}

\subsubsection{Convergence to Approximate Balance Band}

Finally, we show that loads eventually converge to an {\it approximate balance band} where loads are within $(E-1)$ of the target load $L$. Specifically, the below Theorem \ref{thm:band_stability} shows that once an expert's load enters the band, it remains in that band for all subsequent iterations. Later, Theorem \ref{thm:band_entry} will show that all loads eventually enter the band in finite time.

\begin{thm}\label{thm:band_stability}
\textbf{(Approximate Balance Band Stability)} Assume the setting of Proposition~\ref{prop:ubar}. Then, once an expert's load enters the range $\left[L-\left(E-1\right),L+\left(E-1\right)\right]$, it remains in that range for all subsequent iterations.
\end{thm}
\begin{proof}
Fix an expert $k$ and suppose that for some iteration $n$,
\[
  A_k^{(n)} \in \big[L-(E-1),\,L+(E-1)\big].
\]
It suffices to show that $A_k^{(n+1)}$ also lies in this interval as all subsequent iterations then follow by induction.
Consider the following three cases:
\begin{itemize}
\item If $A_k^{(n)} = L$, then Proposition~\ref{prop:ubar} yields
\[
  \big|A_k^{(n+1)} - A_k^{(n)}\big| \le E-1,
\]
so $A_k^{(n+1)} \in [L-(E-1),\,L+(E-1)]$.
\item If $L+(E-1) \geq A_k^{(n)} > L$, then expert $k$ is overloaded at iteration $n$ and hence, by Theorem~\ref{thm:DSswitching}, it cannot be the entering expert in iteration $n+1$ in any change. Therefore,
\[
  A_k^{(n+1)} \le A_k^{(n)} \le L+(E-1).
\]
Combining this with the per-iteration bound from Proposition~\ref{prop:ubar} gives
\[
  A_k^{(n+1)} \ge A_k^{(n)} - (E-1) \ge L-(E-1),
\]
so $A_k^{(n+1)}\in [L-(E-1),\,L+(E-1)]$.
\item If $L > A_k^{(n)} > L-(E-1)$, then expert $k$ is underloaded at iteration $n$ and an analogous argument to the above case (but instead using the fact that $k$ is underloaded and so can not be the the exiting expert in any change) shows that $A_k^{(n+1)}\in [L-(E-1),\,L+(E-1)]$ as well.
\end{itemize}
This completes the proof.
\end{proof}

Now, we show the finite-time convergence of all expert loads to the approximate balance band.

\begin{lem}\label{lem:uniform_dominance}
\textbf{(Uniform Score Dominance Implies Load Dominance)}
Assume the setting of Theorem \ref{thm:LdecBound}. Fix an iteration $n$ and consider two fixed experts $k,k'$. If
\[
  \gamma_{ik}+p_k^{(n)} > \gamma_{ik'}+p_{k'}^{(n)}\qquad\forall i\in\{1,\dots,T\},
\]
then, $k'\in\alpha_n\left(\bi\right)$ implies $k\in\alpha_n\left(\bi\right)$ for any token $\bi$. Consequently, $A_k^{(n)}\ge A_{k'}^{(n)}$.
\end{lem}
\begin{proof}
Consider any fixed token $\bi$. If $k'\in\alpha_n\left(\bi\right)$, then $k'$ is among the $K$ largest values of $\left\{\gamma_{\bi\ell}+p_\ell^{(n)}\right\}_{\ell=1}^E$. Since $\gamma_{ik}+p_k^{(n)}$ is strictly larger than $\gamma_{ik'}+p_{k'}^{(n)}$  for all $i$ and there are no ties, expert $k$ must rank above $k'$ for token $\bi$. So, $k \in \topkind(\{\gamma_{\bi\ell}+p^{(n)}_\ell\}_\ell)$ and $k\in\alpha_n\left(\bi\right)$. Applying this argument across all tokens $i$ for which $k^\prime \in \alpha_n\left(i\right)$ yields $A_k^{(n)}\ge A_{k'}^{(n)}$.
\end{proof}

\begin{thm}\label{thm:band_entry}
\textbf{(Finite-Time Entry into the Approximate Balance Band)}
Assume the setting of Proposition~\ref{prop:ubar}. Then, for each expert $k$, there exists a finite iteration $N_k$ such that its load satisfies
\[
  A_k^{(N_k)} \in \big[L-(E-1),\,L+(E-1)\big].
\]
Moreover, letting $N:=\max_{k} N_k$, we have $A_k^{(m)} \in [L-(E-1),\,L+(E-1)]$ for every $m\ge N$.
\end{thm}
\begin{proof}
Fix an expert $k$. We prove that the load of expert $k$ must enter the band in finite time.

Suppose, for contradiction, that $A_k^{(n)}\notin [L-(E-1),\,L+(E-1)]$ for every $n\ge0$. Since $A_k^{(n)}$ is an integer, for each $n$ we must have bands in the ``over-band'' region $A_k^{(n)}\ge L+E$ or the ``under-band'' region $A_k^{(n)}\le L-E$. Moreover, by Proposition~\ref{prop:ubar}, $\big|A_k^{(n+1)}-A_k^{(n)}\big|\le E-1$, so expert $k$'s load cannot move between the over-band region to the under-band region without landing in the band. Hence, exactly one of the following cases holds for all $n\ge0$:
\[
  A_k^{(n)}\ge L+E \quad\text{for all }n
  \qquad\text{or}\qquad
  A_k^{(n)}\le L-E \quad\text{for all }n.
\]
We will show a contradiction in the the first, over-band case. The second case analogously follows a nearly identical argument.

Thus, we consider when $A_k^{(n)}\ge L+E$ for all $n$. So, $k$ is overloaded at every iteration, so $\sign(L-A_k^{(n)})=-1$ and the update \eqref{eq:alf_lb_update} gives $p_k^{(n+1)}=p_k^{(n)}-u$ for all $n$.

Next, every token is routed to exactly $K$ experts, so $\sum_{r=1}^E A_r^{(n)} = KT = EL$ for all $n$. Since $A_k^{(n)}>L$ for all $n$, at each iteration, there must exist at least one underloaded expert: otherwise, all $E$ loads would be at least $L$ with at least one strictly larger than $L$, forcing the sum to exceed $EL$. Because there are finitely many experts, there exists an expert $k^{\star}$ that is underloaded at infinitely many iterations, that is, $A_{k^{\star}}^{(n)}<L$ for infinitely many $n$.

For every $n$, the bias gap \eqref{eq:bias_gap} satisfies
\[
  \begin{aligned}
  \gap p_{k^{\star}k}^{(n+1)}-\gap p_{k^{\star}k}^{(n)}
  &= \left(p_{k^{\star}}^{(n+1)}-p_k^{(n+1)}\right)-\left(p_{k^{\star}}^{(n)}-p_k^{(n)}\right) \\
  &= \left(p_{k^{\star}}^{(n+1)}-p_{k^{\star}}^{(n)}\right)-\left(p_k^{(n+1)}-p_k^{(n)}\right) \\
  &= u\left(\sign\!\left(L-A_{k^{\star}}^{(n)}\right) - \sign\!\left(L-A_k^{(n)}\right)\right) \qquad \text{(by \eqref{eq:alf_lb_update})}\\
  &= u\left(\sign\!\left(L-A_{k^{\star}}^{(n)}\right)+1\right)\\
  &\ge \;0,
  \end{aligned}
\]
where, in the second-to-last line, we used $\sign\!\left(L-A_k^{(n)}\right)=-1$ since we assumed expert $k$ is overloaded at every iteration; and the last-line inequality holds because $\sign\!\left(\cdot\right)\in\{-1,0,1\}$.

Next, whenever $A_{k^{\star}}^{(n)}<L$, we have $\sign\!\left(L-A_{k^{\star}}^{(n)}\right)=1$ and thus $\gap p_{k^{\star}k}^{(n+1)}-\gap p_{k^{\star}k}^{(n)}=2u$. Since $k^{\star}$ is underloaded infinitely often, $\gap p_{k^{\star}k}^{(n)}\to+\infty$ and, because it is nondecreasing, there exists $n_1$ such that
\[
  \gap p_{k^{\star}k}^{(n)} > 1 \qquad\forall n\ge n_1.
\]
For any token $i$ and any $n\ge n_1$,
\[
  (\gamma_{ik^{\star}}+p_{k^{\star}}^{(n)}) - (\gamma_{ik}+p_k^{(n)})
  = (\gamma_{ik^{\star}}-\gamma_{ik}) + (p_{k^{\star}}^{(n)}-p_k^{(n)})
  > -1 + 1 = 0,
\]
where we used $\gamma_{ik^{\star}},\gamma_{ik}\in(0,1)$ (Section \ref{sec:deterministic_assumptions}). Hence, for all tokens $i$ and all $n\ge n_1$,
\[
  \gamma_{ik^{\star}}+p_{k^{\star}}^{(n)} > \gamma_{ik}+p_k^{(n)}.
\]
By Lemma~\ref{lem:uniform_dominance}, this implies $A_{k^{\star}}^{(n)}\ge A_k^{(n)}\ge L+E$ for all $n\ge n_1$, so $k^{\star}$ is overloaded for all $n\ge n_1$. This contradicts that $k^{\star}$ is underloaded infinitely often.

Therefore, the over-band case is impossible. The under-band case follows analogously.

We conclude that expert $k$ must enter the band $[L-(E-1),\,L+(E-1)]$ at some finite time $N_k$. Since $k$ was arbitrary, this holds for all experts. Letting $N:=\max_k N_k$ yields simultaneous band entry, and Theorem~\ref{thm:band_stability} gives permanence thereafter.
\end{proof}

When the number of tokens is much larger than the number of experts ($T\gg E$), the deviation of $(E-1)$ from the perfectly balanced load $L=KT/E$ is negligible. This demonstrates DeepSeek's ALF-LB desirability under this section's stylized, deterministic setting. The next section will consider a more realistic stochastic setting.

\section{Stochastic Analysis via Online Optimization}\label{sec:oco_analysis}

In practice, the affinity scores $\gamma_{ik}^{(n)}$ evolve during training. Thus, we generalize the previously considered Lagrangian \eqref{eq:L} by now {\bf assuming here in Section \ref{sec:oco_analysis}} that the scores $\gamma_{ik}^{(n)}$ are {\it stochastic} and drawn from expert-dependent distributions. In particular, at iteration $n$, we assume a random affinity score $\gamma_{ik}^{(n)} \in (0,1)$ is observed for each token $i\in\{1,\dots,T\}$ and expert $k\in\{1,\dots,E\}$.  The algorithm updates a shift vector $p^{(n)}\in\R^E$ and, for each token $i$, selects the $K$ experts with the largest values of $\gamma_{ik}^{(n)}+p^{(n)}_k$, where $p^{(n)}_k$ is the $k$-th coordinate of $p^{(n)}$. 

Using the notation of Section \ref{sec:primal_dual_alflb}, we note that this section will consider a step-size choice of $\epsilon_k^{(n)} = u/n$ instead of the $u/|L - A_k^{(n)}|$ step-size chosen by \cite{wang2024auxiliary}. This is because, when affinity scores become stochastic and time-varying, analyzing the coordinate-dependent $u/|L - A_k^{(n)}|$ step-size sequence becomes technically intricate. In contrast, the diminishing and coordinate-independent $u/n$ step-size connects directly with ideas from online convex analysis \citep[Section 3.3.1]{hazan2016introduction} leading to cleaner theoretical insights. This adjustment maintains experimental relevance and practicality: Figure \ref{fig:convergence_behavior} and Table \ref{tab:final_compare} demonstrate that using the $u/n$ step-size is comparable in effectiveness as using the original $u/|L - A_k^{(n)}|$ step-size. In fact, Table \ref{tab:final_compare} shows that the $u/n$ step-size leads to a slightly {\it better} load balancing performance than the $u/|L - A_k^{(n)}|$ step-size at the cost of a slightly higher validation loss.

\subsection{Notation}\label{sec:oco_notation}

For any vector $z\in\R^E$, define $\topkind(z)\subseteq\{1,\dots,E\}$ to be the set of indices of the $K$ largest coordinates of $z$ with ties broken arbitrarily. For round $n$ and token $i$, denote $\Gamma^{(n),i} := \left(\gamma_{i1}^{(n)},\dots,\gamma_{iE}^{(n)}\right)\in\R^E$. Routing at round $n$ sends token $i$ to the experts in $\topkind\!\left(\Gamma^{(n),i}+p^{(n)}\right)$.

Fix an integer $K\in\{1,\dots,E\}$. We frame this as minimizing the per-round online loss, which corresponds to the dual online objective:
\begin{equation}
  f^{(n)}(p) 
  = \sum_{i=1}^T \; \sum_{k\in \topkind\!\left(\Gamma^{(n),i}+p\right)} \left(\gamma_{ik}^{(n)} + p_k\right) 
  - L \sum_{k=1}^E p_k,
  \label{eq:OCOobjective}
\end{equation}
where $L:=KT/E$ is the desired per-expert load for Top-K routing. The $k$-th component of the loss gradient $g^{(n)}(p) \in \R^E$ is given by
\begin{equation}
  g^{(n)}_k(p):=\nabla_k f^{(n)}(p) = A_k^{(n)}(p) - L,
  \label{eq:grad_f}
\end{equation}
where $A_k^{(n)}(p):= \left|\left\{i: k \in \topkind\!\left(\Gamma^{(n),i}+p\right)\right\}\right|$ counts the number of tokens for which expert $k$ lies in the per-token Top-K set under shifts $p$.
The online dual update corresponding to \eqref{eq:dualupdate} can then be rewritten as $p_{k}^{\left(n+1\right)} \gets p_{k}^{\left(n\right)} - \epsilon_{k}^{\left(n\right)}g_k^{(n)}(p^{(n)})$.

\subsection{Distributional Assumptions}\label{sec:distassume}
{\bf Assume for the remainder of this section that} each affinity $\gamma_{ik}^{(n)}$, for a fixed expert $k$, is drawn independently\footnote{This independence assumption is a stylization: various mechanisms (attention, layer norm, etc.) in earlier layers could create dependencies between token embeddings. However, it gives us a starting point for building a tractable, baseline theory. Furthermore, Figure \ref{fig:scores_histograms} demonstrates that the distributions of $\gamma_{ik}^{(n)}$ remain stable and well-behaved throughout training on DeepSeekMoE-1B models, which indicates that independence is empirically well-founded as an approximating simplification at least at the marginal distribution level.} from a distribution that
\begin{itemize}
\item depends only on the expert $k$,
\item has bounded support on $(0,1)$, and
\item has a probability density function (pdf) $\varphi_k$ that is upper bounded by some expert-independent constant.
\end{itemize}
Thus, for fixed $k$, the vectors of random affinity scores $\Gamma^{(n),i} = \left(\gamma_{ik}^{(n)}\right)_{k=1}^E$ for the $i$-th token in the $n$-th iteration are i.i.d across $i$ and $n$. While this assumption may seem strong, our experiments in Figure \ref{fig:scores_histograms} from training 1B-parameter DeepSeekMoE models suggest that it is close to reality.

\begin{figure}[!ht]
  \centering
  \begin{subfigure}[b]{0.45\textwidth}
      \centering
      \includegraphics[width=\textwidth]{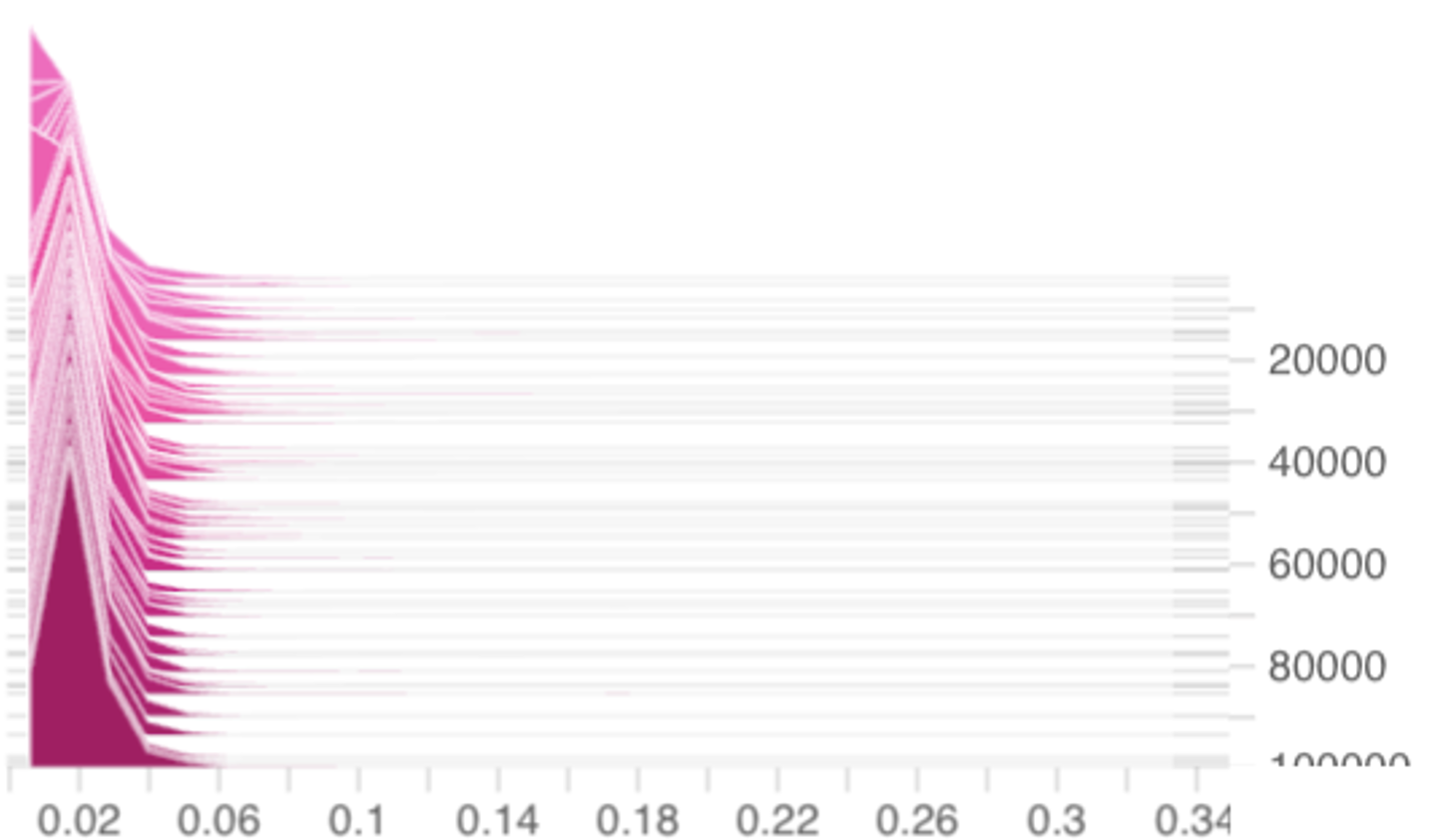}
      \caption{$u/|L-A_k^{(n)}|$ Step-Size}
  \end{subfigure}
  \hfill
  \begin{subfigure}[b]{0.45\textwidth}
      \centering
      \includegraphics[width=\textwidth]{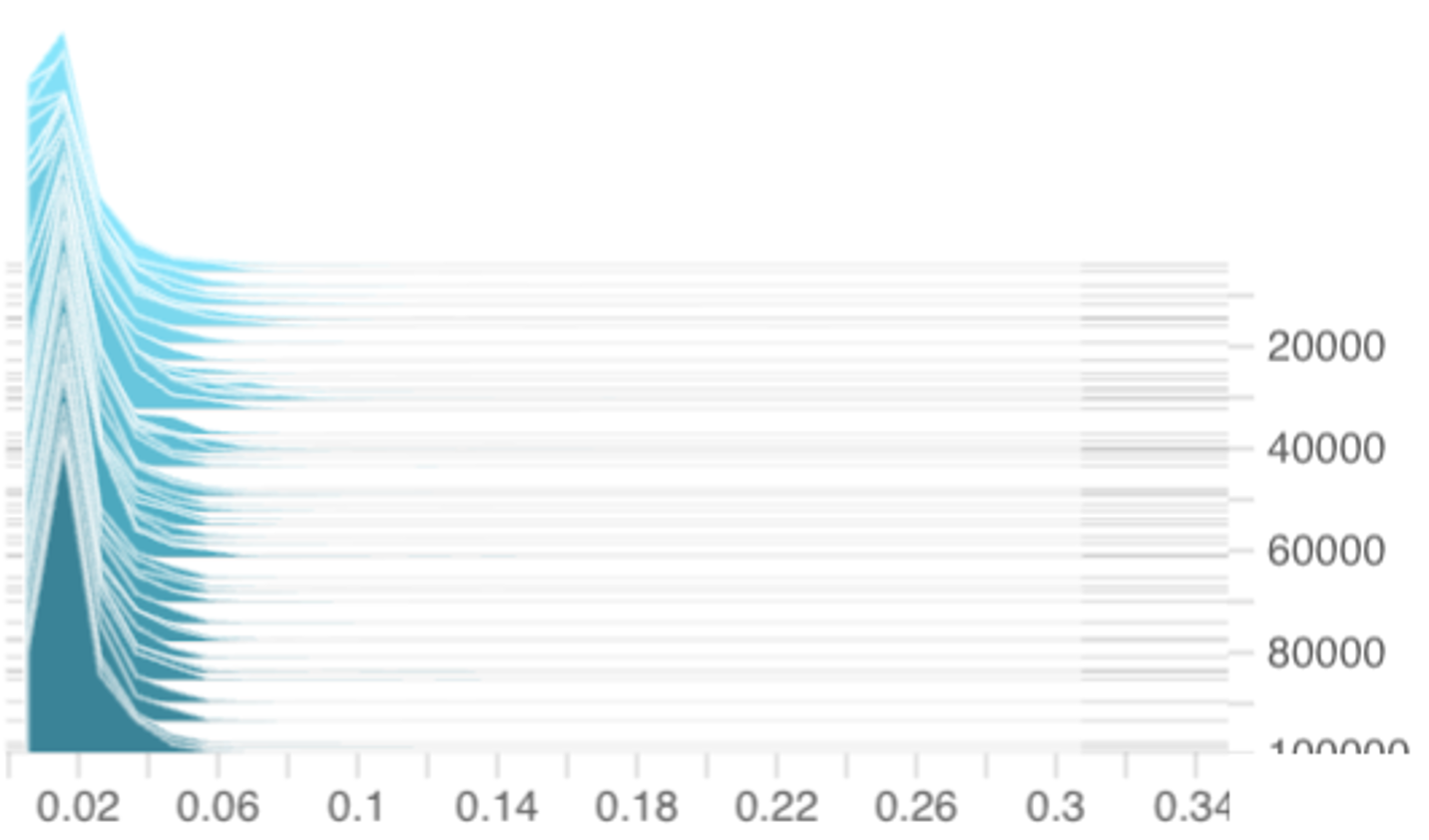}
      \caption{$u/n$ Step-Size}
  \end{subfigure}
  \hfill
  \begin{subfigure}[b]{0.45\textwidth}
    \centering
    \includegraphics[width=\textwidth]{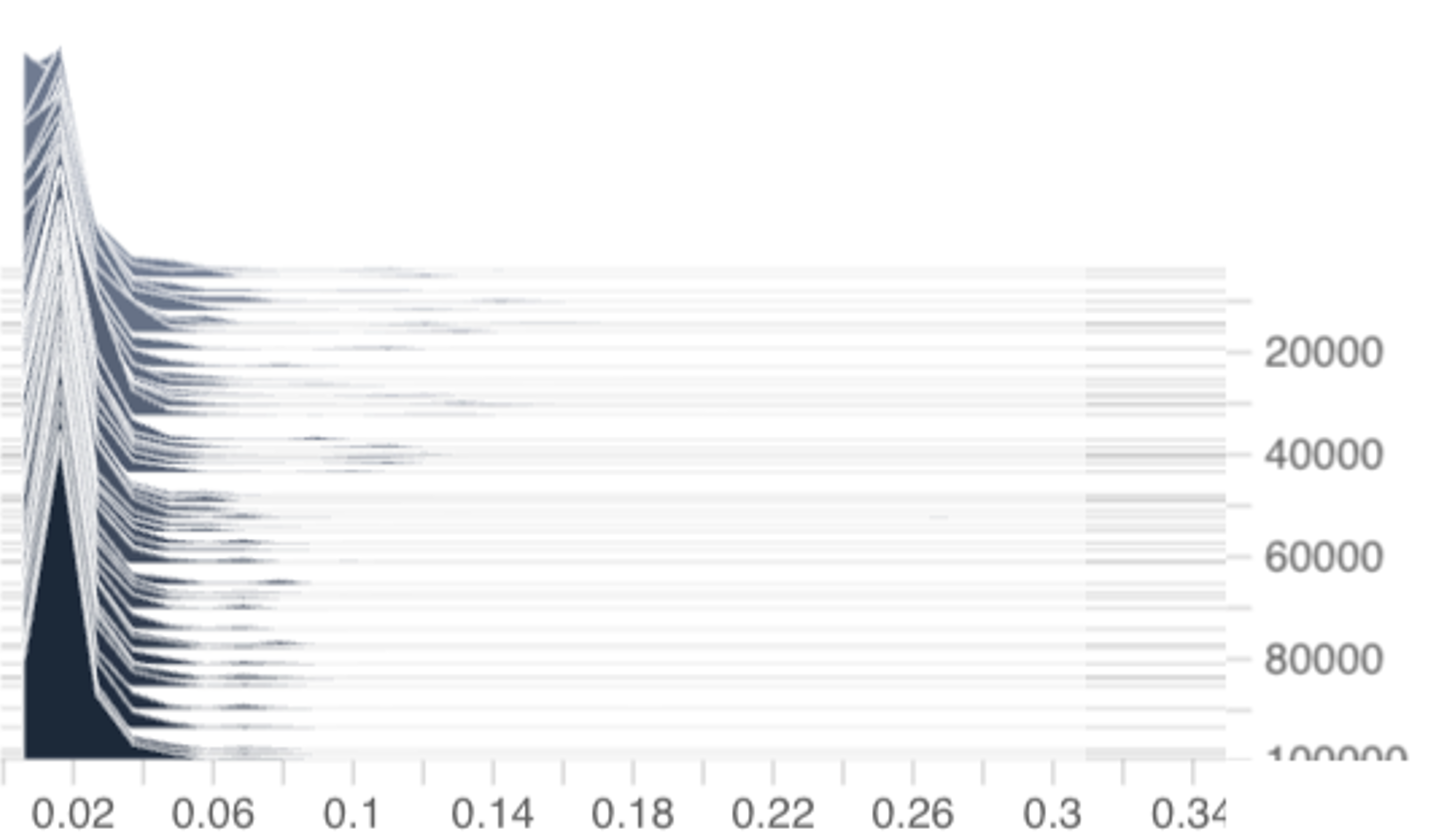}
    \caption{$u/\sqrt{n}$ Step-Size}
  \end{subfigure}
  \hfill
  \begin{subfigure}[b]{0.45\textwidth}
    \centering
    \includegraphics[width=\textwidth]{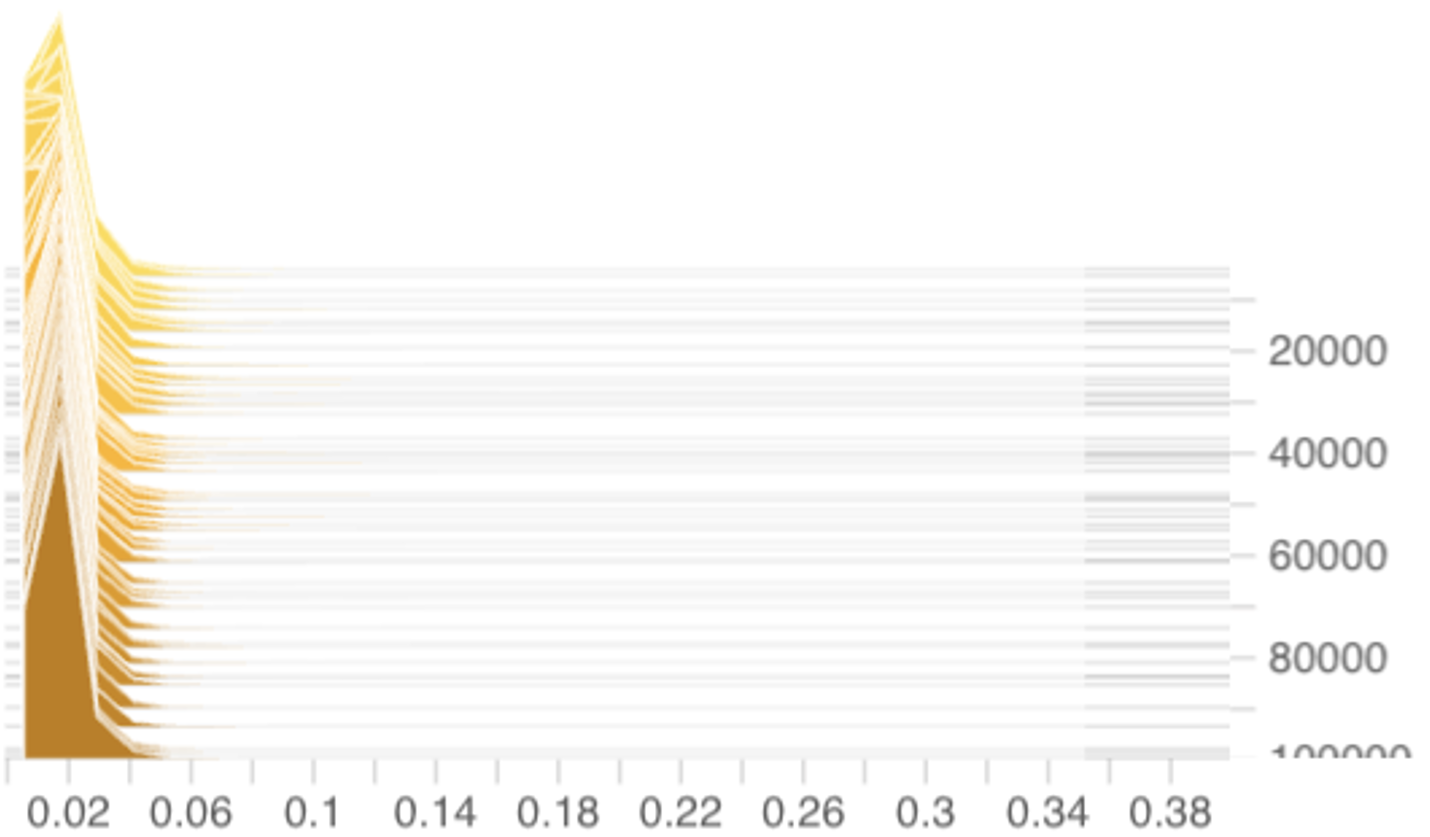}
    \caption{Auxiliary Loss}
  \end{subfigure}
  \caption{Time-lapse histograms of the marginal distributions of $\gamma^{(n)}_{ik}$ during the training of \mbox{1B-parameter} DeepSeekMoE models using different choices of step-size (Section \ref{sec:primal_dual_alflb}). Experimental details in Section \ref{sec:experiments}. \label{fig:scores_histograms}}
\end{figure}

\subsection{Online Loss Gradient Analysis}

\subsubsection{Unbiasedness}

It is easy to check that the loss $f^{(n)}$ is convex. Thus, the expected loss $\mathbf{f}(p)=\E\left[\left.f^{(n)}(p)\right| p\right]$ is also convex.  We first show that the loss gradient $g^{(n)}(p)$ is an unbiased estimator of $\nabla \mathbf{f}(p)$ with an explicit form expression.

\begin{prop}[Unbiased Stochastic Gradient]\label{prop:unbiased}
For any fixed $p\in\R^E$,
\[
  \E\left[\left. g^{(n)}(p)\right| p\right] \,=\, \nabla \mathbf{f}(p) \,=\, T\,\pi(p) - L\,\ones,
\]
where $\ones$ is the ones-vector, and $\pi(p) \in \R^E$ is the selection probabilities vector with $k$-th coordinate
\[
  \pi_k(p) \,:=\, \Pr\!\left(k\in \topkind\!\left(\Gamma+p\right)\right),
\]
for some generic affinities $\Gamma \stackrel{d}{=} \Gamma^{(n),i}$. Necessarily, $\sum_k \pi_k(p)=K$ almost surely.
\end{prop}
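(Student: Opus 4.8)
The plan is to read the conditional expectation off the closed form \eqref{eq:grad_f} and then argue that one may differentiate $\mathbf{f}(p)=\E[f^{(n)}(p)\mid p]$ under the expectation; the identity $\sum_k\pi_k(p)=K$ will drop out of the Top-$K$ constraint.

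First I would write $A_k^{(n)}(p) = \sum_{i=1}^T \ind\!\left[k \in \topkind\!\left(\Gamma^{(n),i}+p\right)\right]$, so that $g_k^{(n)}(p) = A_k^{(n)}(p) - L$ is a sum of $T$ indicators. Conditioning on $p$ and using that the vectors $\Gamma^{(n),i}$ are i.i.d.\ across $i$ (Section \ref{sec:distassume}), each indicator has common mean $\pi_k(p) = \Pr(k \in \topkind(\Gamma + p))$, so $\E[g^{(n)}(p)\mid p] = T\pi(p) - L\ones$. The identity $\sum_k \pi_k(p) = K$ then follows because $\topkind(\Gamma + p)$ is a set of size exactly $K$ for every realization, so $\sum_k \ind[k \in \topkind(\Gamma + p)] = K$ surely; taking expectations gives the claim.

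Next I would verify that $T\pi(p) - L\ones$ is in fact $\nabla\mathbf{f}(p)$, which is the substantive step. For fixed $p$, since each $\gamma_{ik}^{(n)}$ has a bounded density, the differences $(\gamma_{ik}^{(n)}+p_k)-(\gamma_{ik'}^{(n)}+p_{k'})$ carry no atom at $0$, so with probability one the coordinates of every $\Gamma^{(n),i}+p$ are pairwise distinct. On this full-probability event $\topkind(\Gamma^{(n),i}+p')$ is locally constant near $p$, hence $f^{(n)}$ is differentiable at $p$ with $\nabla f^{(n)}(p) = g^{(n)}(p)$, consistent with \eqref{eq:grad_f}. To push the gradient through the expectation I would use that $f^{(n)}$ is a finite pointwise maximum of affine functions of $p$ (maximize $\sum_{i,k}(\gamma^{(n)}_{ik}+p_k)x_{ik}$ over $x$ feasible for Top-$K$ routing, then subtract $L\sum_k p_k$), hence convex, and that wherever it is differentiable its gradient has coordinates $A^{(n)}_k(p) - L \in [-L, T-L]$, so $f^{(n)}$ is globally Lipschitz with a deterministic constant. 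The difference quotients are then dominated and converge a.s.\ to $g^{(n)}(p)$, so dominated convergence --- equivalently, the standard fact that $\partial\mathbf{f}(p) = \E[\partial f^{(n)}(p)\mid p]$ for integrable convex integrands --- gives $\nabla\mathbf{f}(p) = \E[g^{(n)}(p)\mid p]$. Combining with the first step finishes the proof.

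The hard part is the interchange of $\nabla$ and $\E$: I expect the main care to go into ruling out ties almost surely (via the bounded-density assumption) and then invoking domination/convexity to legitimize differentiating under the integral. The computation of the conditional mean and the verification of $\sum_k\pi_k(p)=K$ are routine bookkeeping.
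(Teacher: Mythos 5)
Your proposal is correct and follows essentially the same route as the paper's proof: compute the conditional mean of $g^{(n)}(p)$ as a sum of i.i.d.\ indicators to get $T\pi(p)-L\ones$, then justify exchanging $\nabla$ and $\E$ by noting ties occur with probability zero and the difference quotients are uniformly bounded (the paper bounds the a.e.\ partial derivative by $1$ and applies dominated convergence; your convexity/Lipschitz framing is just a slightly different packaging of the same domination argument).
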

\begin{proof}
For a given token $i$, let 
\begin{equation}
X_{ik}(p):=\ind\left\{k\in\topkind\left(\Gamma^{(n),i} + p\right)\right\}.\label{eq:Xik}
\end{equation}
Then, $\E\left[X_{ik}(p)\mid p\right]=\pi_k(p)$ by definition and the $k$-th expert loads are $A_k^{(n)}(p)=\sum_{i=1}^T X_{ik}(p)$. Hence, $\E\left[A_k^{(n)}(p)\mid p\right]=T\,\pi_k(p)$. Therefore,
\begin{align*}
\E\left[g^{(n)}(p)\mid p\right]
  &= \E\left[\nabla f^{(n)}(p)\mid p\right] \\
  &= \left(\E\left[A_1^{(n)}(p)\mid p\right]-L,\dots,\E\left[A_E^{(n)}(p)\mid p\right]-L\right) \quad \text{{by Eq. \ref{eq:grad_f}}}\\
  &= T\,\pi(p)-L\,\ones.
\end{align*}
Since the distribution of $\Gamma^{(n),i}$ is independent of $i$, observe that 
\[\mathbf{f}(p)=T\,\E\left[\sum_{k\in \topkind(\Gamma+p)}(\Gamma_k+p_k)\right]-L\sum_k p_k.\]
To calculate $\nabla \mathbf{f}$, we can move the differentiation into the expectation via the dominated convergence theorem: the $\Gamma_{k}$ have continuous densities so ties occur with probability zero; thus, for almost every realization, the partial derivative $\partial_{p_k}\sum_{m\in \topkind(\Gamma+p)}(\Gamma_m+p_m)$ exists and equals $\ind\{k\in \topkind(\Gamma+p)\}\le 1$, yielding $\partial_{p_k}\,\E\left[\sum_{m\in \topkind(\Gamma+p)}(\Gamma_m+p_m)\right]=\Pr\{k\in \topkind(\Gamma+p)\}=\pi_k(p)$. The desired result follows.
\end{proof}
Using Proposition \ref{prop:unbiased}, we can compute the variance and second moment of $g^{(n)}(p)$.
\begin{prop}(Variance and 2nd Moment) \label{prop:var2ndmom} The variance and second moment of $g^{(n)}(p)$ are given by
\[
  \Var\left[\left.g^{(n)}(p) \right| p\right] = T\left(K-\sum_{k=1}^E \pi_k(p)^2\right),
\]
\[
  \E\!\left[\left.\|g^{(n)}(p)\|^2 \right| p\right] = T^2\left(\sum_{k=1}^E \pi_k(p)^2 - \tfrac{K^2}{E}\right) + T\left(K-\sum_{k=1}^E \pi_k(p)^2\right).
\]
\end{prop}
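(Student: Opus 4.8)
The plan is to reduce both identities to elementary second-moment computations for sums of i.i.d.\ Bernoulli indicators, reusing the decomposition from the proof of Proposition~\ref{prop:unbiased}: $g^{(n)}_k(p) = A_k^{(n)}(p) - L = \sum_{i=1}^T X_{ik}(p) - L$, where $X_{ik}(p) = \ind\{k\in\topkind(\Gamma^{(n),i}+p)\}$. Conditional on $p$, the vectors $(X_{i1}(p),\dots,X_{iE}(p))$ are i.i.d.\ across $i$ by the distributional assumptions of Section~\ref{sec:distassume}; each $X_{ik}(p)$ is $\{0,1\}$-valued with mean $\pi_k(p)$, hence $\Var[X_{ik}(p)\mid p] = \pi_k(p)(1-\pi_k(p))$, and within a single token exactly $K$ of the coordinates equal $1$, so $\sum_k \pi_k(p) = K$ (as already noted in Proposition~\ref{prop:unbiased}).

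For the variance, I read $\Var[g^{(n)}(p)\mid p]$ as the total variance $\sum_{k=1}^E \Var[g^{(n)}_k(p)\mid p]$, i.e.\ the trace of the conditional covariance matrix. Subtracting the constant $L$ leaves variances unchanged, so this equals $\sum_k \Var[A_k^{(n)}(p)\mid p]$; independence across tokens gives $\Var[A_k^{(n)}(p)\mid p] = T\,\pi_k(p)(1-\pi_k(p))$, and summing over $k$ while invoking $\sum_k \pi_k(p) = K$ produces $T\big(K - \sum_k \pi_k(p)^2\big)$.

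For the second moment I would apply the bias--variance decomposition $\E[\|g^{(n)}(p)\|^2\mid p] = \|\E[g^{(n)}(p)\mid p]\|^2 + \sum_k \Var[g^{(n)}_k(p)\mid p]$. The second summand is the quantity just computed. For the first, Proposition~\ref{prop:unbiased} gives $\E[g^{(n)}(p)\mid p] = T\,\pi(p) - L\,\ones$, so $\|\E[g^{(n)}(p)\mid p]\|^2 = T^2\sum_k\pi_k(p)^2 - 2TL\sum_k\pi_k(p) + EL^2 = T^2\sum_k\pi_k(p)^2 - 2TLK + EL^2$. Substituting $L = KT/E$ collapses $-2TLK + EL^2$ to $-K^2T^2/E$, yielding $T^2\big(\sum_k\pi_k(p)^2 - K^2/E\big)$; adding back the variance term gives exactly the stated formula.

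There is no substantive obstacle here; the only points that deserve a sentence of care are (i) that $\Var[g^{(n)}(p)\mid p]$ denotes the scalar total variance rather than the full covariance matrix, so only the diagonal entries are needed, and (ii) that the within-token correlations among $\{X_{ik}(p)\}_k$ (nonzero, since those coordinates sum to $K$) never enter, precisely because both claims involve only diagonal/trace quantities. I would also remark that every expectation and variance is conditional on $p$, so throughout the argument $\pi(p)$ is treated as a deterministic vector.
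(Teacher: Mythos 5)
Your proposal is correct and follows essentially the same route as the paper: both interpret the conditional variance as the scalar total (trace) variance, exploit independence of the per-token indicator vectors to reduce it to $T\sum_k \pi_k(p)(1-\pi_k(p)) = T\bigl(K-\sum_k \pi_k(p)^2\bigr)$, and then obtain the second moment via the bias--variance decomposition together with $\|\mathrm{E}[g^{(n)}(p)\mid p]\|^2 = \|T\pi(p)-L\ones\|^2$ and $L=KT/E$. The only cosmetic difference is that you sum coordinate variances of $A_k^{(n)}(p)$ whereas the paper sums per-token squared deviations $\E\bigl[\|\pi(p)-X_i(p)\|^2\mid p\bigr]$; these are the same trace computation.
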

\begin{proof}
Using Equation \eqref{eq:grad_f} and Proposition \ref{prop:unbiased},
\begin{align*}
  \Var\left(\left. g^{(n)}(p)\right| p \right) &= \E\!\left[\|g^{(n)}(p)-\nabla \mathbf{f}(p)\|^2\mid p\right]\\
  &= \E\!\left[\|\pi(p)-A^{(n)}(p)\|^2\mid p\right]
\end{align*}
where $A^{(n)}(p) = \left(A_1^{(n)}(p), \dots, A_E^{(n)}(p)\right)$ is the vector of expert loads.
Let $X_i(p) \in \{0,1\}^E$ denote the assignment vector for token $i$ with components as in \eqref{eq:Xik}. Then, $A^{(n)}(p) = \sum_{i=1}^T X_{i}(p)$ and $\E[X_i(p)] = \pi(p)$. So,
\begin{align*}
  \Var\left(\left. g^{(n)}(p)\right| p \right) &=  \E\!\left[\left.  \left\|\sum_{i=1}^T \pi(p)- X_{i}(p)\right\|^2 \right| p \right]\\
  &= \sum_{i=1}^T \E\!\left[\|\pi(p)-X_i(p)\|^2\mid p\right] \quad \text{by independence across $i$}\\
  &= T\sum_{k=1}^E \Var(X_{1k}\mid p) \quad \text{by identical distribution across $i$}\\
  &= T\left(K-\sum_{k=1}^E \pi_k(p)^2\right),
\end{align*}
since $\sum_k X_{ik}=K$ a.s. and $\Var(X_{1k}\mid p)=\pi_k(p)\left(1-\pi_k(p)\right)$. 
Finally, decomposing the second moment and applying Proposition \ref{prop:unbiased} gives
\begin{align*}
  \E\!\left[\|g^{(n)}(p)\|^2\mid p\right]
  &= \|\nabla\mathbf{f}(p)\|^2 + \E\!\left[\|g^{(n)}(p)-\nabla\mathbf{f}(p)\|^2\mid p\right] \\
  &= T^2\left(\sum_{k=1}^E \pi_k(p)^2 - \tfrac{K^2}{E}\right) + T\left(K-\sum_{k=1}^E \pi_k(p)^2\right).
\end{align*}
This completes the proof.
\end{proof}
Proposition \ref{prop:var2ndmom} will be useful later to prove Theorem \ref{thm:logarithmic_regret}.

\subsection{Second-Order Analysis of Expected Loss}\label{sec:2ndAnalysis}

In the following Sections \ref{sec:2ndAnalysis}-\ref{sec:strong_convexity}, we show that the expectation of the Top-K objective is {\it strongly convex} with respect to $p$ updates under certain (realistic) assumptions. The strong convexity then allows us to show a logarithmic regret bound in Theorem \ref{thm:logarithmic_regret}. Without strong convexity, it is routine to verify that the regret bound is at best $\mathrm{O}(\sqrt{N})$ without additional assumptions. The next lemma characterizes the second directional derivative of the expected objective.

\begin{prop}[Second Directional Derivative]\label{prop:second_directional}
Let $\Gamma=(\Gamma_1,\dots,\Gamma_E)$ be a random affinity vector in $\R^E$ with the properties in Section \ref{sec:distassume}. For biases $p\in\R^E$ define
\[
  \mathbf{F}_K(p) \;=\; \mathbb{E}\left[\,\sum_{k\in \topkind\!\left(\Gamma+p\right)}(\Gamma_k + p_k)\,\right],
\]
and let $\varphi_k$ and $\Phi_k$ denote the density and CDF of $\Gamma_k$, respectively. Then, for any direction $\delta\in\R^E$, its second directional derivative at $p$ is given by the formula
\begin{equation}
  D^2\mathbf{F}_K(p)[\delta,\delta] \;=\; \sum_{k<\ell} w^{(K)}_{k\ell}(p)\,(\delta_k-\delta_\ell)^2,
  \label{eq:D2weighted}
\end{equation}
where the symmetric edge weights are
\[
  w^{(K)}_{k\ell}(p) \;=\; \int_{-\infty}^{\infty} \! \varphi_k(v-p_k)\,\varphi_\ell(v-p_\ell)\,B^{(K-1)}_{k,\ell}(v;p)\,dv,\qquad w^{(K)}_{k\ell}(p)\ge 0,
\]
with
\[
  B^{(K-1)}_{k,\ell}(v;p) \,=\, \sum_{\substack{S\subseteq [E]\setminus\{k,\ell\}\\ |S|=K-1}} \; \prod_{j\in S}\Phi^c_j(v-p_j)\, \prod_{m\in [E]\setminus(\{k,\ell\}\cup S)}\!\Phi_m(v-p_m).
\]
\end{prop}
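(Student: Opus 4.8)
The plan is to first reduce to a one-dimensional problem along the direction $\delta$, and then differentiate twice in that scalar parameter. Concretely, write $p_t := p + t\,\delta$ and $G(t) := \mathbf{F}_K(p+t\delta)$, so that $D^2\mathbf{F}_K(p)[\delta,\delta] = G''(0)$. The first step is to obtain a clean first-derivative formula: since ties occur with probability zero (the $\Gamma_k$ have bounded densities), for almost every realization the Top-$K$ set is locally constant in $t$, so by dominated convergence $G'(t) = \E\!\left[\sum_{k}\delta_k\,\ind\{k\in\topkind(\Gamma+p_t)\}\right] = \sum_k \delta_k\,\Pr\!\left(k\in\topkind(\Gamma+p_t)\right)$. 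The key combinatorial identity here is that $k\in\topkind(z)$ iff the number of coordinates $m\neq k$ with $z_m > z_k$ is at most $K-1$; equivalently, $\Gamma_k + (p_t)_k$ must exceed at least $E-K$ of the other shifted coordinates.

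The second step is to differentiate $G'(t) = \sum_k \delta_k\,\pi_k(p_t)$ once more. The cleanest route is to condition on the value $v = \Gamma_k + (p_t)_k$ of the shifted $k$-th coordinate and integrate: $\pi_k(p_t) = \int \varphi_k(v - (p_t)_k)\,\Pr(\text{exactly the right number of others are below } v)\,dv$. Differentiating in $t$ produces two kinds of terms: (i) terms where the density $\varphi_k$ itself is differentiated, and (ii) terms where one of the factors $\Phi_m(v-(p_t)_m)$ or $\Phi^c_j(v-(p_t)_j)$ inside the "at least $E-K$ below" event is differentiated, each such differentiation pulling out a $\varphi_m$ and a $(\delta_k - \delta_m)$ factor from the chain rule. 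The type-(i) terms should cancel in aggregate when summed against $\sum_k \delta_k$ — this cancellation is exactly what converts the expression into the symmetric $\sum_{k<\ell}(\delta_k-\delta_\ell)^2$ form — while the type-(ii) terms, after symmetrizing the pair $(k,\ell)$ and recognizing $B^{(K-1)}_{k,\ell}$ as the probability that exactly $K-1$ of the remaining $E-2$ shifted coordinates lie above the common threshold $v$, assemble into $\sum_{k<\ell} w^{(K)}_{k\ell}(p)(\delta_k-\delta_\ell)^2$. Nonnegativity of $w^{(K)}_{k\ell}$ is then immediate since the integrand is a product of densities and a probability.

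I expect the main obstacle to be step two: bookkeeping the differentiation of the event "$k$ is in the Top-$K$" in a way that makes the pairwise structure transparent, and in particular verifying that the type-(i) boundary/density-derivative terms genuinely cancel rather than leaving a residual. A disciplined way to handle this is to avoid differentiating $\varphi_k$ at all: instead write $\pi_k(p_t) = \E\big[\,\Phi_k\big(\text{(the }K\text{-th largest of the other shifted coords)} - (p_t)_k + \text{offset}\big)\big]$ — i.e. express membership via an order statistic of the other $E-1$ coordinates — so that only CDFs $\Phi_k$ and the order-statistic density get differentiated, and the chain rule cleanly yields $(\delta_k - \delta_\ell)$ factors. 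One must also justify interchanging $\frac{d}{dt}$ and $\E[\cdot]$ twice; this follows from the uniform density bound (giving an integrable dominating function for the difference quotients) together with the a.s.\ absence of ties, and should be stated but not belabored. Finally, restricting $p_t$ to $t$ near $0$ and setting $t=0$ at the end recovers the stated formula at $p$.
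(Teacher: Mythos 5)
Your overall strategy coincides with the paper's: reduce to the scalar function $t\mapsto \mathbf{F}_K(p+t\delta)$, use the no-ties argument plus dominated convergence to get $\frac{d}{dt}\mathbf{F}_K(p+t\delta)=\sum_k\delta_k\pi_k(p+t\delta)$, then differentiate each $\pi_k$ by conditioning on the $k$-th coordinate and finally symmetrize $\sum_k\delta_k\,D\pi_k(p)[\delta]$ into $\sum_{k<\ell}w^{(K)}_{k\ell}(p)(\delta_k-\delta_\ell)^2$. However, there is a genuine gap in how you plan to execute the second differentiation. You condition on the \emph{shifted} value $v=\Gamma_k+(p_t)_k$, which puts the $t$-dependence inside the density factor $\varphi_k(v-(p_t)_k)$; differentiating that factor is not even licensed by the standing assumptions (only boundedness of $\varphi_k$ is assumed, not differentiability), and your claim that these ``type-(i)'' terms cancel in aggregate and that this cancellation is what produces the symmetric $(\delta_k-\delta_\ell)^2$ form is not how the identity arises. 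The paper sidesteps the issue entirely by conditioning on the \emph{unshifted} value $\Gamma_k=v$: then $\varphi_k(v)$ carries no $t$-dependence, every comparison becomes $\Phi_j\bigl(v-(p_j-p_k)-t(\delta_j-\delta_k)\bigr)$ or its complement, and the chain rule directly emits the factors $(\delta_k-\delta_\ell)$; the stated form with $\varphi_k(v-p_k)\varphi_\ell(v-p_\ell)$ is recovered by a change of variables at the end, and the symmetric quadratic form comes simply from summing $\delta_k(\delta_k-\delta_\ell)w^{(K)}_{k\ell}$ over ordered pairs with $w^{(K)}_{k\ell}=w^{(K)}_{\ell k}$. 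Your proposed workaround via the order statistic of the other $E-1$ shifted coordinates does not fix the problem either, because the law of that order statistic itself depends on $t$, so you would again be differentiating densities rather than only CDFs.

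A second, smaller omission: you assert that the surviving terms ``assemble'' into $B^{(K-1)}_{k,\ell}$, the probability that exactly $K-1$ of the remaining $E-2$ coordinates exceed the threshold, but you do not say why only the exact-count-$(K-1)$ event survives. In the paper this is the telescoping cancellation across the decomposition of the membership event into exact counts $r=0,\dots,K-1$: differentiating the $\Phi_m$ factors in the $r$-stratum produces terms that exactly cancel the terms produced by differentiating the $\Phi^c_j$ factors in the $(r+1)$-stratum, leaving only the boundary stratum $r=K-1$. (Equivalently, one can argue probabilistically that moving the threshold only matters when the count of competitors above it sits exactly at $K-1$.) Some version of this bookkeeping must be made explicit for the formula, and in particular the superscript $K-1$ in $B^{(K-1)}_{k,\ell}$, to come out correctly.
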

\begin{proof}
For some fixed argument $\gamma\in[0,1]^E$, define the function
\[\bar f_{p,K}(\gamma)=\sum_{k\in \topkind\!\left(\gamma+p\right)}(\gamma_k+p_k),\]
so that $\mathbf{F}_K(p)=\int \bar f_{p,K}(\gamma)\,\varphi(\gamma)\,d\gamma$, where $\varphi(\gamma)=\prod_{k=1}^E \varphi_k(\gamma_k)$ is the joint density of $\Gamma$.  For $t \in \R$, define $p(t):=p+t\,\delta$ and $\widetilde{\mathbf{F}}_K(t):=\mathbf{F}_K\left(p(t)\right)$.  Since ties occur with probability zero, $\mathbf{F}_K$ is a.s. differentiable with gradient $\nabla \mathbf{F}_K(p)=\pi(p)$. Hence, the chain rule gives
\[
  \widetilde{\mathbf{F}}_K'(0) \,=\, \sum_{k=1}^E \delta_k\,\pi_k(p).
\]
We will next compute $\widetilde{\mathbf{F}}_K''(0)$. For each $k$, using independence and conditioning on $\Gamma_k=v$,
\begin{align*}
  \pi_k(p)
  &= \Pr\!\left(k\in \topkind\!\left(\Gamma+p\right)\right) \\
  &= \int_0^1 \varphi_k(v)\,\Pr\!\left( k\in \topkind\!\left(\Gamma+p\right)\,\middle|\,\Gamma_k=v \right) dv \\
  &= \int_0^1 \varphi_k(v)\,\Pr\!\left( \left|\{j\ne k: \Gamma_j+p_j>v+p_k\}\right| \le K{-}1 \right) dv \\
  &= \int_0^1 \varphi_k(v)\,\sum_{r=0}^{K-1} \Pr\!\left( \left|\{j\ne k: \Gamma_j+p_j>v+p_k\}\right| = r \right) dv \\
  &= \int_0^1 \varphi_k(v)\,\sum_{r=0}^{K-1}\; \sum_{\substack{S\subseteq [E]\setminus\{k\}\\ |S|=r}}\Pr\!\left( \forall j{\in}S: \Gamma_j{+}p_j>v{+}p_k \ \mathrm{and} \ \forall m \notin S{\cup}\{k\}: \Gamma_m{+}p_m\le v{+}p_k \right) dv \\
  &= \int_0^1 \varphi_k(v)\,\sum_{r=0}^{K-1}\; \sum_{\substack{S\subseteq [E]\setminus\{k\}\\ |S|=r}} \prod_{j\in S}\Phi^c_j(v-p_j+p_k)\,\prod_{m\notin S\cup\{k\}} \Phi_m(v-p_m+p_k)\,dv.
\end{align*}
where $\Phi^c_j(\cdot) := 1-\Phi_j(\cdot)$ and $S$ represents possible index sets within the top-K components of $\Gamma + p$ that are also larger than $v+p_k$. For notational convenience, set
\[\theta_{jk}(v,t) := v - (p_j - p_k) - t\,(\delta_j - \delta_k).\]
Then,
\begin{align*}
  \pi_k\left(p(t)\right)
  =\; &\int_0^1 \varphi_k(v)
      \sum_{r=0}^{K-1}
      \sum_{\substack{S\subseteq [E]\setminus\{k\}\\ |S|=r}}
      \left[
        \prod_{j\in S}\Phi^c_j\left(\theta_{jk}(v,t)\right)
        \prod_{m\notin S\cup\{k\}} \Phi_m\left(\theta_{mk}(v,t)\right)
      \right] dv.
\end{align*}
Consider the integrand 
\begin{equation}
\varphi_k(v)\;\sum_{r=0}^{K-1}\; \sum_{\substack{S\subseteq [E]\setminus\{k\}\\ |S|=r}} \prod_{j\in S} \Phi^c_j\left(\theta_{jk}(v,t)\right)\;\prod_{m\notin S\cup\{k\}} \Phi_m\left(\theta_{mk}(v,t)\right).
\label{eq:integrand_lem}
\end{equation}
For each $j$, because $\Phi_j$ is differentiable everywhere on $\R$ except (possibly) at $0$ or $1$, the derivative of the integrand \eqref{eq:integrand_lem} with respect to $t$ at $t=0$ exists for all but finitely many $v\in (0,1)$. Thus, the integrand \eqref{eq:integrand_lem} is differentiable at $t=0$ for almost all $v\in (0,1)$.

Next, observe that for each fixed $S\subseteq [E]\setminus\{k\}$, the product in \eqref{eq:integrand_lem} has the a.e. derivative
\begin{multline*}
  \frac{d}{dt}\,\left[\prod_{j\in S} \Phi^c_j\left(\theta_{jk}(v,t)\right)\;\prod_{m\notin S\cup\{k\}} \Phi_m\left(\theta_{mk}(v,t)\right)\right] \\
  = \sum_{\ell\notin S\cup\{k\}} \underbrace{(\delta_k-\delta_\ell)\,
    \varphi_\ell\left(\theta_{\ell k}(v,t)\right)
    \prod_{j\in S} \Phi^c_j\left(\theta_{jk}(v,t)\right)
    \prod_{m\notin S\cup\{k,\ell\}} \Phi_m\left(\theta_{mk}(v,t)\right)}_{:=\ \Xi^{+}_{S,\ell}} \\
    - \sum_{\ell\in S} \underbrace{(\delta_k-\delta_\ell)\,\varphi_\ell\left(\theta_{\ell k}(v,t)\right)
    \prod_{j\in S\setminus\{\ell\}} \Phi^c_j\left(\theta_{jk}(v,t)\right)
    \prod_{m\notin S\cup\{k\}} \Phi_m\left(\theta_{mk}(v,t)\right)}_{:= \ \Xi^{-}_{S,\ell}}.
\end{multline*}
This leads to a telescoping cancellation across $r$ in the integrand \eqref{eq:integrand_lem}. Specifically, for each fixed $r < K{-}1$ and $\ell$, every index set $S_r$ such that $|S_r|=r$ corresponds to another index set $S_{r+1}^\ell$ such that $|S_{r+1}^\ell|=r+1$ and $S_{r+1}^\ell=S_r\cup\{\ell\}$. It is easy to check that $\Xi^{+}_{S_{r}, \ell}=\Xi^{-}_{S_{r+1}^\ell, \ell}$.

So, the sum in \eqref{eq:integrand_lem} telescopes over $r$ except at the $r=K{-}1$ boundary where $\Xi^{+}_{S_{K-1}, \ell}$ has no corresponding ``$\Xi^{-}_{S_{K}^\ell, \ell}$'' term to cancel with. Hence, for almost all fixed $v \in (0,1)$, 
\begin{align*}
&~\frac{d}{dt} \left.\left[\varphi_k(v)\;\sum_{r=0}^{K-1}\; \sum_{\substack{S\subseteq [E]\setminus\{k\}\\ |S|=r}} \prod_{j\in S} \Phi^c_j\left(\theta_{jk}(v,t)\right)\;\prod_{m\notin S\cup\{k\}} \Phi_m\left(\theta_{mk}(v,t)\right)\right]\right|_{t=0} \\
&= \varphi_k(v) \sum_{\ell\ne k} (\delta_k-\delta_\ell)\,
    \varphi_\ell\left(\theta_{\ell k}(v,0)\right)\,
    \sum_{\substack{S\subseteq [E]\setminus\{k,\ell\}\\ |S|=K-1}} \prod_{j\in S} \Phi^c_j\left(\theta_{jk}(v,0)\right)\;\prod_{m\notin S\cup\{k,\ell\}} \Phi_m\left(\theta_{mk}(v,0)\right).
\end{align*}
Next, for any non-zero $h \in \R$, it is routine to check that the assumptions in Section \ref{sec:distassume} imply that the integrand of the following is uniformly bounded:
\begin{align*}
&~\frac{1}{h} \left[\pi_k\left(p(h)\right) - \pi_k\left(p(0)\right)\right] \\
=&~ \int_0^1 \frac{1}{h}\left[\varphi_k(v)\;\sum_{r=0}^{K-1}\; \sum_{\substack{S\subseteq [E]\setminus\{k\}\\ |S|=r}} \prod_{j\in S} \Phi^c_j\left(\theta_{jk}(v,h)\right) \prod_{m\notin S\cup\{k\}} \Phi_m\left(\theta_{mk}(v,h)\right)\right.\\
& \left.\hspace{1.2cm} - \varphi_k(v)\;\sum_{r=0}^{K-1}\; \sum_{\substack{S\subseteq [E]\setminus\{k\}\\ |S|=r}} \prod_{j\in S} \Phi^c_j\left(\theta_{jk}(v,0)\right)\;\prod_{m\notin S\cup\{k\}} \Phi_m\left(\theta_{mk}(v,0)\right)\right] dv
\end{align*}
Therefore, by the dominated convergence theorem, we have that 
 $\pi_k\left(p(t)\right)$ is differentiable at $t=0$, with the derivative being given (after a change of variables) by
\begin{multline*}
  D\pi_k(p)[\delta] \\
  = \sum_{\ell\ne k} (\delta_k-\delta_\ell)
    \int_{-\infty}^{\infty} \varphi_k(v-p_k)\,\varphi_\ell\left(v-p_\ell\right)
    \sum_{\substack{S\subseteq [E]\setminus\{k,\ell\}\\ |S|=K-1}}
     \prod_{j\in S} \Phi^c_j(v-p_j)
    \prod_{m\notin S\cup\{k,\ell\}} \Phi_m(v-p_m)
    \,dv
\end{multline*}
Finally, by symmetry,
\begin{align*}
  D^2\mathbf{F}_K(p)[\delta,\delta]
  &= \frac{d}{dt}\left.\sum_{k=1}^E \delta_k\,\pi_k(p+t\delta)\right|_{t=0}
   \,=\, \sum_{k=1}^E \delta_k\,D\pi_k(p)[\delta] \\
  &= \sum_{k=1}^E \sum_{\ell\ne k} \delta_k(\delta_k-\delta_\ell)\,w^{(K)}_{k\ell}(p)
   \,=\, \sum_{1\le k<\ell\le E} w^{(K)}_{k\ell}(p)\,(\delta_k-\delta_\ell)^2,
\end{align*}
which is exactly \eqref{eq:D2weighted}.
\end{proof}

\subsection{Experimentally-Realistic Assumptions on $p$} \label{sec:prac_calK}
Observe that the TopKInd decision of the MoE router is invariant to adding the same constant to all coordinates of $p$. Motivated by this, we define the zero-sum subspace
\[\calZ := \left\{ z \in \R^E : \sum_{k=1}^E z_k = 0 \right\},\]
where $\calZ$ is the linear subspace orthogonal to the all-ones vector. Thus, we assume the following about ALF-LB for some update direction $\delta^\prime$:
\begin{equation}
 p^{(n+1)} \gets \proj_{\calZ}\left(p^{(n)} - \delta^\prime\right).
 \label{eq:projKassumption}
\end{equation}

\begin{rem}[{\bf Practicality of $\calZ$ Assumptions}]\label{rem:prac_calK}
The assumption \eqref{eq:projKassumption} is not artificial; it arises naturally from the problem definition:
\begin{itemize}
    \item \textbf{Zero-sum gradients.} Since $\sum_k A_k^{(n)}(p){=}TK$, the components of the gradient \eqref{eq:grad_f} sum to zero:
    \[
    \sum_k \nabla_k f^{(n)}(p) = \sum_k (A_k^{(n)}(p) - L) = TK - EL = 0.
    \]
    Thus, any update of the form
    \[
    p^{(n+1)} \gets p^{(n)} - \epsilon^{(n)} \nabla f^{(n)}(p^{(n)})
    \]
    automatically preserves $p^{(n+1)} \in \calZ$ as long as $p^{(n)} \in \calZ$. In practice, we initialize with $p^{(0)} = 0$, so the projection in \eqref{eq:projKassumption} is just the identity mapping.
    \item \textbf{Explicit $\calZ$-projection with per-coordinate step-sizes.} In the more general case where heterogeneous step-sizes $\epsilon_k^{(n)}$ are used across coordinates,
    \[
      p^{(n+1)} \gets p^{(n)} - \left(\epsilon_1^{(n)} g^{(n)}_{1}, \dots, \epsilon_E^{(n)} g^{(n)}_{E}\right),
    \]
    the updated $p^{(n+1)}$ may not reside in $\calZ$. (In fact, the difference between per-coordinate step-sizes and homogeneous step-sizes can be seen in Figure \ref{fig:bias_histograms} where the per-coordinate $\epsilon_k^{(n)} = u/|L-A_k^{(n)}|$ step-size results in a bias distribution that shifts rightward over time while the homogeneous $\epsilon^{(n)} = u/n$ and $\epsilon^{(n)} = u/\sqrt{n}$ step-sizes result in bias distributions that stay centered around zero.) However, in this per-coordinate step-size case, it is well-known that the projection onto $\calZ$ is equivalent to subtracting the componentwise mean:
    \[
      \proj_{\calZ}(p) = p - \left(\tfrac{1}{E}\sum_{k=1}^E p_k\right)\ones,
    \]
    which is computationally negligible.
\end{itemize}
\end{rem}

\begin{figure}[!ht]
  \centering
  \begin{subfigure}[b]{0.45\textwidth}
      \centering
      \includegraphics[width=\textwidth]{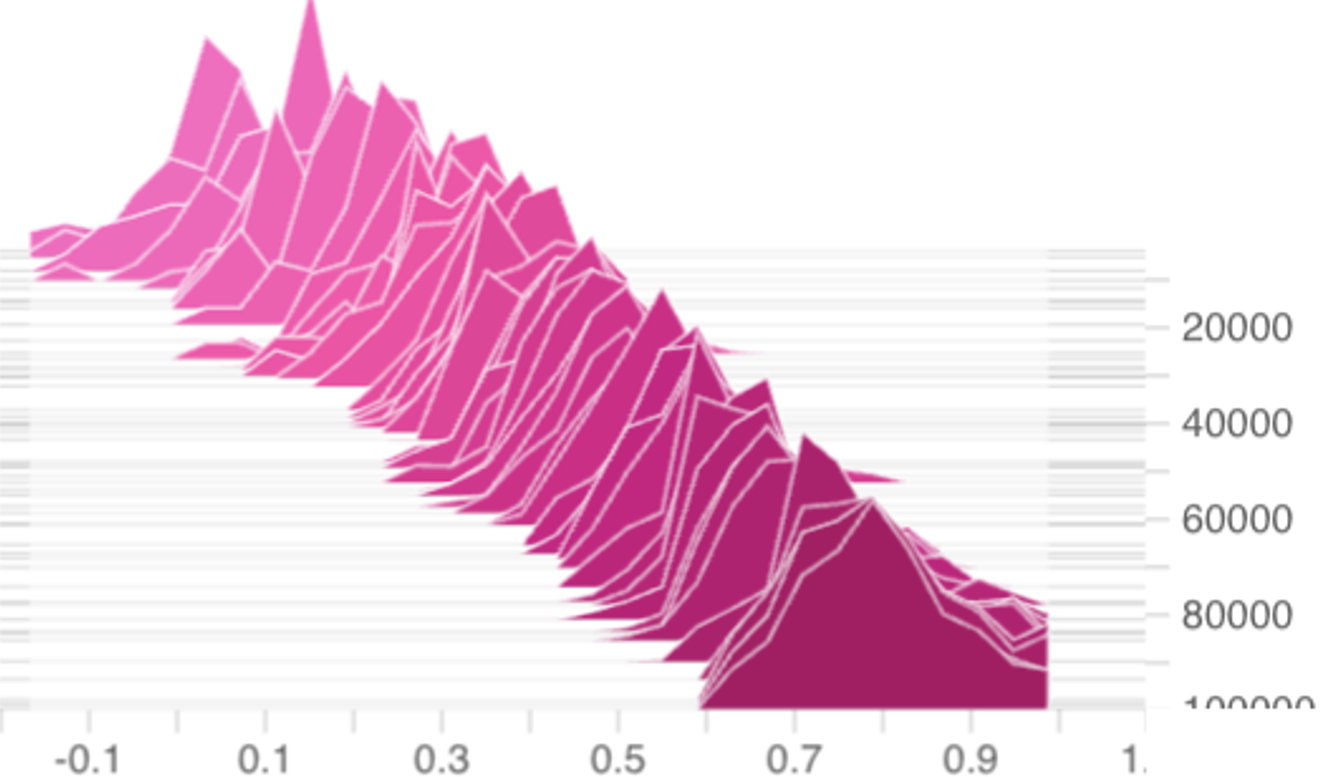}
      \caption{$u/|L-A_k^{(n)}|$ Step-Size}
  \end{subfigure}
  \hfill
  \begin{subfigure}[b]{0.45\textwidth}
      \centering
      \includegraphics[width=\textwidth]{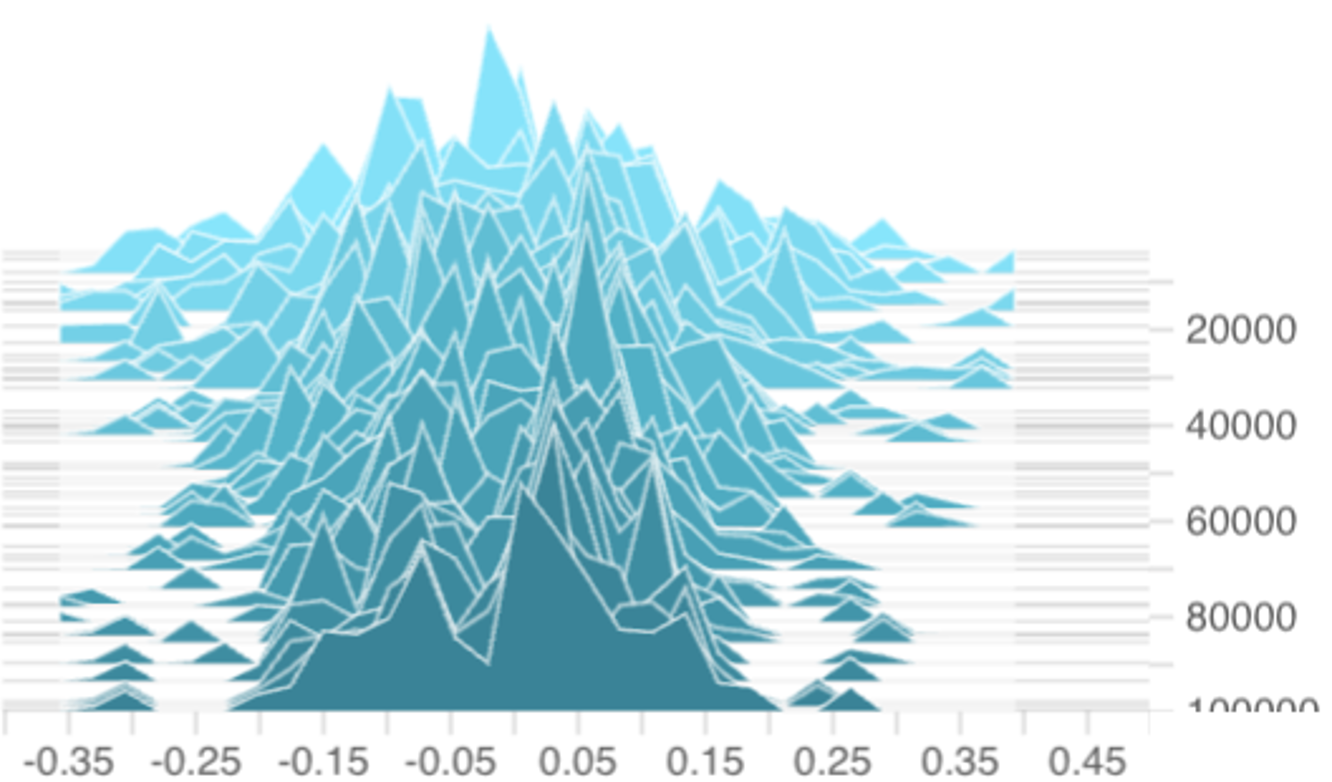}
      \caption{$u/n$ Step-Size}
  \end{subfigure}
  \hfill
  \begin{subfigure}[b]{0.45\textwidth}
    \centering
    \includegraphics[width=\textwidth]{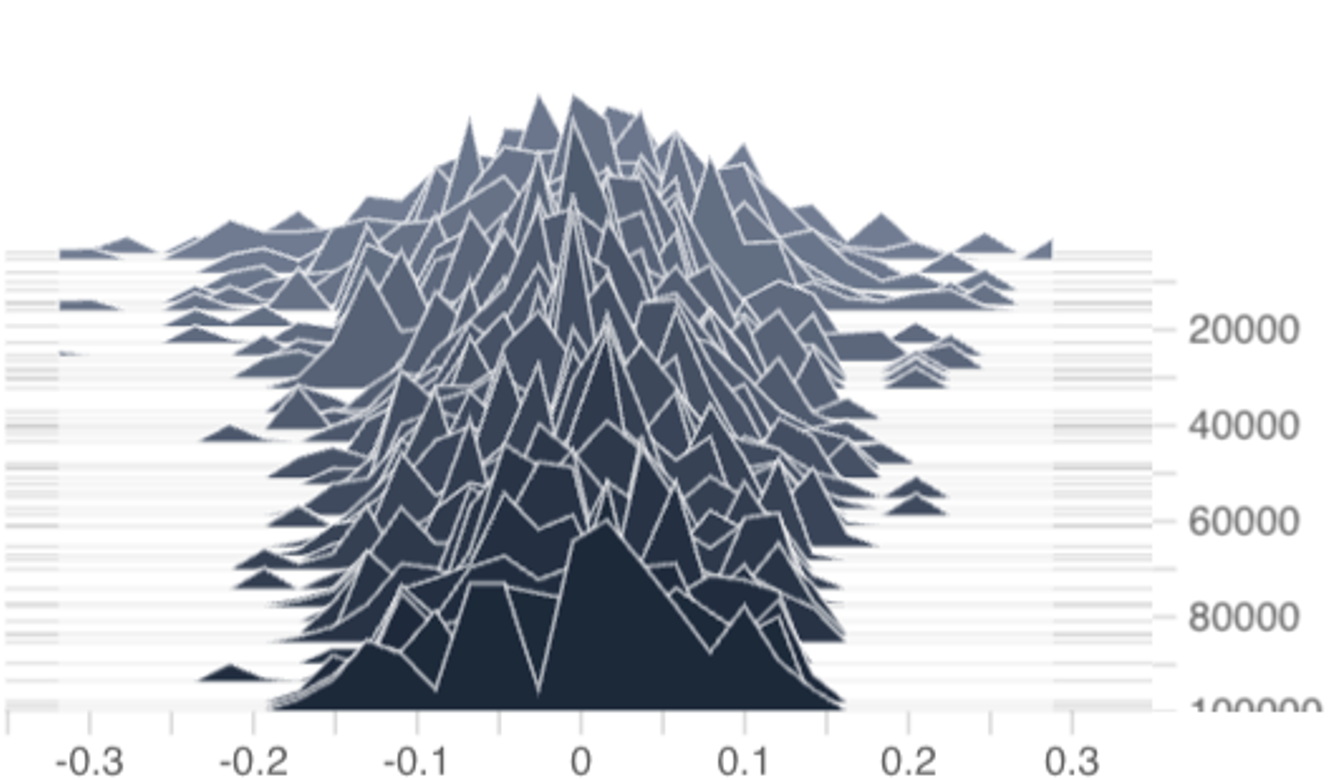}
    \caption{$u/\sqrt{n}$ Step-Size}
  \end{subfigure}
  \caption{Time-lapse histograms of the marginal distributions of the ALF-LB biases $p$ during the training of \mbox{1B-parameter} DeepSeekMoE models using different choices of step-size (Section \ref{sec:primal_dual_alflb}). No explicit constraints were enforced on $p$. Section \ref{sec:experiments} provides experimental details.\label{fig:bias_histograms}}
\end{figure}

Additionally, we will make the technical assumption that $\mathrm{diam}(p) := \max_j p_j - \min_j p_j \le 1 - \kappa$ for some constant $\kappa > 0$, which we found holds without explicit enforcement in our experiments on 1B-parameter DeepSeekMoE models (Figure \ref{fig:bias_histograms}). Thus, we can realistically assume
\[p \in \operatorname{dom_{\calZ}^\kappa} := \{p\in \calZ: \operatorname{diam}(p)\le 1-\kappa\}.\]

\subsection{Strong Convexity}\label{sec:strong_convexity}

Next, observe that the component densities of the affinity scores $\Gamma$ are continuous and strictly positive on $(0,1)$. Thus, by the continuity of $w^{(K)}_{k\ell}(p)$ in $p$ and compactness of $\operatorname{dom_{\calZ}^\kappa}$,
\[
  c_{K}(d) \,\,:=\,\, \inf_{\substack{p\in \operatorname{dom_{\calZ}^\kappa}}}\,\min_{k<\ell} w^{(K)}_{k\ell}(p) \,>\, 0.
\]
Hence, by Proposition \ref{prop:second_directional},
\begin{equation}
  \delta^{\top} \nabla^2 \mathbf{F}_K(p)\,\delta \;\ge\; c_{K}(d) \sum_{k<\ell} (\delta_k - \delta_\ell)^2.
  \label{eq:dFd}
\end{equation}
The assumption \eqref{eq:projKassumption} ensures that $p^{(n)} \in \calZ$ for all $n$. Thus, since $\calZ$ is a linear subspace,
\begin{align*}
  p^{(n+1)} &= \proj_{\calZ}\left(p^{(n)} - \delta^\prime\right) \\
   &= \proj_{\calZ}\left(p^{(n)}\right) - \proj_{\calZ}\left(\delta^\prime\right) \\
   &= p^{(n)} - \underbrace{\proj_{\calZ}\left(\delta^\prime\right)}_{\delta}.
\end{align*}
Since the update direction $\delta$ lies in $\calZ$,
\[
  \sum_{k<\ell} (\delta_k-\delta_\ell)^2 \;=\; E\,\|\delta\|^2 - \left(\sum_{k=1}^E \delta_k\right)^2 =\; E\,\|\delta\|^2.
\]
Combining with property \eqref{eq:dFd}, this yields
\[
  \delta^{\top} \nabla^2 \mathbf{F}_K(p)\,\delta \;\ge\; c_{K}(d) E \,\|\delta\|^2.
\]
Recall the expected loss is $\mathbf{f}(p) = T\mathbf{F}_K(p) - L\sum_{k} p_k$ and observe the linear term does not affect curvature; thus, for all $p,\delta \in \calZ$ with $p$ having diameter at most $d$, $\mathbf{f}$ is $\mu_K$-strongly convex with
\begin{equation}
  \mu_K := T c_{K}(d) E.
  \label{eq:muTCE}
\end{equation}

\subsection{Logarithmic Regret Bound for ALF-LB}\label{sec:log_regret}
Consider the minimizer of the expected loss
\[
p^* = \arg\min_{p\in\calZ} \mathbf{f}(p).
\]
Since $\mathbf{f}$ is $\mu_K$-strongly convex in $\calZ$, $p^*$ is necessarily unique.

Define the regret $R_N := \sum_{n=1}^N \left(f^{(n)}(p^{(n)}) - f^{(n)}(p^*)\right)$. We now give a logarithmic bound on the expected regret $\E[R_N]$ with the ALF-LB update
\begin{equation}
  p^{(n+1)} \gets \proj_{\calZ}\left(p^{(n)} - \epsilon^{(n)} \, \nabla f^{(n)}\left(p^{(n)}\right)\right).
  \label{eq:alflb_proj}
\end{equation}
While the details are adapted to the specific problem at hand, the proof technique is standard in online convex optimization (see, for example, \citet[Section 3.3.1]{hazan2016introduction}). For clarity, define the following short-hand notations:
\begin{equation*}
  \Delta_n := \E\!\left[\|p^{(n)}{-}p^*\|^2\right], \; s_n := \sum_{k=1}^E \pi_k\!\left(p^{(n)}\right)^2, \; a_n := \E\left[\mathbf{f}\left(p^{(n)}\right)-\mathbf{f}\left(p^*\right)\right], \; \sigma^2_{T,E,K} := T^2\left(K-\tfrac{K^2}{E}\right).
\end{equation*}

\begin{lem}[One-step accounting] \label{lem:per_round}
Under the assumptions and notations of Section \ref{sec:oco_notation}-\ref{sec:strong_convexity}, for any $\epsilon^{(n)}{>}0$, the iteration \eqref{eq:alflb_proj} satisfies
\begin{equation}
  2\,a_n \;\le\; \frac{\Delta_n-\Delta_{n+1}}{\epsilon^{(n)}} \;{-}\; \mu_K\,\Delta_n \;+
  \epsilon^{(n)}\,\sigma^2_{T,E,K}.
  \label{eq:per_round_grad}
\end{equation}
\end{lem}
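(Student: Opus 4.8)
The plan is to carry out the standard strongly-convex online gradient descent one-step analysis, adapted to the stochastic gradient $g^{(n)}(p^{(n)})$ and the projection onto $\calZ$. First I would expand the squared distance after one update. Since $p^* \in \calZ$ and $\proj_{\calZ}$ is a (non-expansive) projection onto a linear subspace containing $p^*$, we have $\|p^{(n+1)} - p^*\|^2 \le \|p^{(n)} - \epsilon^{(n)} g^{(n)}(p^{(n)}) - p^*\|^2$. Expanding the right-hand side gives
\[
\|p^{(n+1)} - p^*\|^2 \le \|p^{(n)} - p^*\|^2 - 2\epsilon^{(n)} \langle g^{(n)}(p^{(n)}), p^{(n)} - p^*\rangle + (\epsilon^{(n)})^2 \|g^{(n)}(p^{(n)})\|^2.
\]
Rearranging isolates the inner-product term, which I then bound in expectation.

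Second, I would take conditional expectation given $p^{(n)}$ and invoke unbiasedness (Proposition \ref{prop:unbiased}): $\E[\langle g^{(n)}(p^{(n)}), p^{(n)} - p^*\rangle \mid p^{(n)}] = \langle \nabla \mathbf{f}(p^{(n)}), p^{(n)} - p^*\rangle$. By $\mu_K$-strong convexity of $\mathbf{f}$ on $\calZ$ (established in Section \ref{sec:strong_convexity}), this inner product is at least $\mathbf{f}(p^{(n)}) - \mathbf{f}(p^*) + \tfrac{\mu_K}{2}\|p^{(n)} - p^*\|^2$. For the last term, I would bound $\E[\|g^{(n)}(p^{(n)})\|^2 \mid p^{(n)}]$ using Proposition \ref{prop:var2ndmom}: it equals $T^2(s_n - K^2/E) + T(K - s_n)$. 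The key observation is that $s_n - K^2/E \le K - K^2/E$ (since each $\pi_k \le 1$ forces $s_n = \sum \pi_k^2 \le \sum \pi_k = K$) and $T(K-s_n) \le T^2(K - K^2/E)$ for $T \ge 1$ provided $s_n \ge K^2/E$ (which holds by Cauchy–Schwarz since $\sum \pi_k = K$ over $E$ coordinates), so the whole second moment is at most $2\sigma^2_{T,E,K}$ — though more directly one simply keeps it as $\le 2\sigma^2_{T,E,K}$, or absorbs the lower-order $T(K-s_n)$ term. (If the intended bound is tighter, I would just note $\E[\|g^{(n)}\|^2 \mid p^{(n)}] \le \sigma^2_{T,E,K}\cdot(\text{const})$ and track constants carefully to match the $\epsilon^{(n)}\sigma^2_{T,E,K}$ appearing in \eqref{eq:per_round_grad}.)

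Third, I would take full expectations, substitute $\Delta_n = \E[\|p^{(n)} - p^*\|^2]$ and $a_n = \E[\mathbf{f}(p^{(n)}) - \mathbf{f}(p^*)]$, and rearrange:
\[
2\epsilon^{(n)} a_n + \epsilon^{(n)}\mu_K \Delta_n \le \Delta_n - \Delta_{n+1} + (\epsilon^{(n)})^2 \sigma^2_{T,E,K}.
\]
Dividing through by $\epsilon^{(n)} > 0$ yields exactly \eqref{eq:per_round_grad}. One subtlety to handle carefully is that $a_n$ is defined via the \emph{expected} loss $\mathbf{f}$, not the realized $f^{(n)}$; the regret $R_N$ in Section \ref{sec:log_regret} uses $f^{(n)}$, so bridging the two requires taking expectations and using that $\E[f^{(n)}(p) \mid p] = \mathbf{f}(p)$ and that $p^{(n)}$ is measurable with respect to the past — a point I would state explicitly when this lemma is applied, but for the lemma itself only the conditional-expectation step matters.

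The main obstacle I anticipate is the second-moment bookkeeping: ensuring the bound $\E[\|g^{(n)}(p^{(n)})\|^2] \le \sigma^2_{T,E,K}$ (or the relevant multiple) holds uniformly, which requires both the elementary inequality $s_n \le K$ and the lower bound $s_n \ge K^2/E$ from Cauchy–Schwarz on $\pi(p^{(n)})$, together with $T \ge 1$ to control the lower-order variance term against the leading $T^2$ term. Everything else is the textbook strongly-convex SGD/OGD one-step inequality (cf. \citet[Section 3.3.1]{hazan2016introduction}), with the only genuinely problem-specific ingredients being the zero-sum-subspace projection geometry and the plug-in of Propositions \ref{prop:unbiased} and \ref{prop:var2ndmom}.
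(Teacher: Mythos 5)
Your overall strategy is exactly the paper's: expand $\|p^{(n+1)}-p^*\|^2$ using nonexpansiveness of $\proj_{\calZ}$ (with $p^*\in\calZ$ by definition), take conditional expectation and invoke Proposition \ref{prop:unbiased} to replace $g^{(n)}$ by $\nabla\mathbf{f}$, apply $\mu_K$-strong convexity on $\calZ$ from Section \ref{sec:strong_convexity}, take total expectation, and divide by $\epsilon^{(n)}$. The only place where you diverge is the gradient-norm term, and that is where your write-up falls short of the stated inequality: Lemma \ref{lem:per_round} has coefficient exactly $1$ on $\epsilon^{(n)}\sigma^2_{T,E,K}$, whereas your bookkeeping via Proposition \ref{prop:var2ndmom} only concludes $\E[\|g^{(n)}(p^{(n)})\|^2\mid p^{(n)}]\le 2\sigma^2_{T,E,K}$ and then hedges about ``tracking constants.'' As written, that proves a weaker inequality with $2\epsilon^{(n)}\sigma^2_{T,E,K}$, not \eqref{eq:per_round_grad}.

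The gap is small and fixable along your own route: from Proposition \ref{prop:var2ndmom},
\begin{equation*}
  T^2\Bigl(s_n-\tfrac{K^2}{E}\Bigr)+T\bigl(K-s_n\bigr)
  \;=\; T^2\Bigl(K-\tfrac{K^2}{E}\Bigr)-\bigl(T^2-T\bigr)\bigl(K-s_n\bigr)
  \;\le\; \sigma^2_{T,E,K},
\end{equation*}
since $s_n\le K$ and $T\ge 1$, so the exact second moment is already at most $\sigma^2_{T,E,K}$ with no factor of $2$ and no need for the Cauchy--Schwarz lower bound $s_n\ge K^2/E$. The paper avoids this accounting entirely by using a deterministic, pointwise bound: because $\nabla f^{(n)}(p)=A^{(n)}(p)-L\ones$ with $A^{(n)}_k(p)\in[0,T]$ and $\sum_k A^{(n)}_k(p)=TK$, one has $\|\nabla f^{(n)}(p)\|^2\le T^2\bigl(K-\tfrac{K^2}{E}\bigr)=\sigma^2_{T,E,K}$ surely, which is then substituted directly. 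Either repair gives the lemma as stated; your remark about $a_n$ being defined through $\mathbf{f}$ rather than $f^{(n)}$ is correct and, as you note, only matters when the lemma is used in Theorem \ref{thm:logarithmic_regret}.
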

\begin{proof}
Since $\calZ$ is a linear subspace, the projection operator is nonexpansive. Thus,
\begin{align*}
  \|p^{(n+1)}{-}p^*\|^2 &= \left\|\proj_{\calZ}\left(p^{(n)}-\epsilon^{(n)}\nabla f^{(n)}\left(p^{(n)}\right)\right)-p^*\right\|^2 \\
  & \le \left\|p^{(n)}-\epsilon^{(n)}\nabla f^{(n)}\left(p^{(n)}\right)-p^*\right\|^2 \\
  & \le\; \|p^{(n)}{-}p^*\|^2 
  \,{-}\, 2\epsilon^{(n)}\!\left\langle \nabla f^{(n)}\left(p^{(n)}\right),\, p^{(n)}{-}p^*\right\rangle +\, \left(\epsilon^{(n)}\right)^2\,\left\|\nabla f^{(n)}\left(p^{(n)}\right)\right\|^2.
\end{align*}
Taking conditional expectation and using Proposition \ref{prop:unbiased} gives
\begin{multline*}
  \E\left[\|p^{(n+1)}{-}p^*\|^2\mid p^{(n)}\right]
  \le \|p^{(n)}{-}p^*\|^2 - 2\epsilon^{(n)}\!\left\langle \nabla \mathbf{f}(p^{(n)}), p^{(n)}{-}p^*\right\rangle \\
  + (\epsilon^{(n)})^2\,\E\left[\,\|\nabla f^{(n)}(p^{(n)})\|^2\mid p^{(n)}\right].
\end{multline*}
Since the TopKInd decision is invariant to adding the same constant to all coordinates of $p$, we can assume without loss of generality that $p^* \in \calZ$. Thus, since $\calZ$ is a linear subspace, $p^{(n)}-p^* \in \calZ$. Then, the $\mu_K$-strong convexity of $\mathbf{f}$ in $\calZ$ (Section \ref{sec:strong_convexity}) gives
\[
  2\left(\mathbf{f}\left(p^{(n)}\right)-\mathbf{f}\left(p^*\right)\right) + \mu_K\,\|p^{(n)}{-}p^*\|^2 \le 2\left\langle \nabla \mathbf{f}(p^{(n)}), p^{(n)}{-}p^*\right\rangle.
\]
Combining the last two expressions, taking total expectation, and rearranging gives
\begin{equation}
  2\,a_n \;\le\; \frac{\Delta_n-\Delta_{n+1}}{\epsilon^{(n)}} \;{-}\; \mu_K\,\Delta_n \;+
  \epsilon^{(n)}\,\E\!\left[\,\|\nabla f^{(n)}(p^{(n)})\|^2\right].
  \label{eq:per_round_basic}
\end{equation}
Recall from Section \ref{sec:oco_notation} that the gradient is $\nabla f^{(n)}(p)=A^{(n)}(p)-L\,\ones$ where $\sum_k A^{(n)}_k(p)=TK$ and each $A^{(n)}_k(p)\in[0,T]$. It is then easy to check that, for any $p$,
\[
  \|\nabla f^{(n)}(p)\|^2 \le \sigma^2_{T,E,K}.
\]
Substituting this bound into \eqref{eq:per_round_basic} yields the desired result.
\end{proof}
\begin{thm}\label{thm:logarithmic_regret}\textbf{(Logarithmic Regret)}
Consider the update \eqref{eq:alflb_proj} run for $N$ iterations with $\epsilon^{(n)}=1/(\mu_K n)$. Then,
\[
  \mathbb{E}[R_N] \;\le\; \frac{\sigma^2_{T,E,K}}{2\mu_K}\,(1+\ln N).
\]
\end{thm}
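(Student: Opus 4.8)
The plan is to feed the specific step-size $\epsilon^{(n)} = 1/(\mu_K n)$ into the one-step accounting inequality of Lemma \ref{lem:per_round}, sum over $n$, and extract a telescoping cancellation; the strong convexity term $-\mu_K\Delta_n$ from \eqref{eq:per_round_grad} is precisely what makes the telescoping work and upgrades the rate from $\mathrm{O}(\sqrt N)$ to $\mathrm{O}(\ln N)$. First I would substitute $\epsilon^{(n)}=1/(\mu_K n)$ into \eqref{eq:per_round_grad} to rewrite the per-round bound as
\[
2\,a_n \;\le\; \mu_K n\,\Delta_n - \mu_K n\,\Delta_{n+1} - \mu_K\,\Delta_n + \frac{\sigma^2_{T,E,K}}{\mu_K n}
\;=\; \mu_K\big((n-1)\Delta_n - n\,\Delta_{n+1}\big) + \frac{\sigma^2_{T,E,K}}{\mu_K n}.
\]

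Next I would sum this over $n=1,\dots,N$. The first group telescopes: since the $+( n-1)\Delta_n$ term at index $n$ cancels the $-(n-1)\Delta_n$ term at index $n-1$, one gets $\sum_{n=1}^N\big((n-1)\Delta_n - n\,\Delta_{n+1}\big) = -N\,\Delta_{N+1}\le 0$, using $\Delta_{N+1}=\E[\|p^{(N+1)}-p^*\|^2]\ge 0$. The remaining term is controlled by the standard harmonic estimate $\sum_{n=1}^N 1/n \le 1+\ln N$. Hence
\[
2\sum_{n=1}^N a_n \;\le\; \frac{\sigma^2_{T,E,K}}{\mu_K}\sum_{n=1}^N\frac1n \;\le\; \frac{\sigma^2_{T,E,K}}{\mu_K}\,(1+\ln N).
\]

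Finally I would connect $\sum_n a_n$ to the expected regret $\E[R_N]$. Because $p^{(n)}$ is a function of the affinity scores observed only in rounds $1,\dots,n-1$, it is independent of the round-$n$ randomness $\Gamma^{(n),i}$, so $\E[f^{(n)}(p^{(n)})\mid p^{(n)}] = \mathbf f(p^{(n)})$ by the definition of $\mathbf f$; and $\E[f^{(n)}(p^*)] = \mathbf f(p^*)$ since $p^*$ is deterministic. Taking total expectations and using the tower property gives $\E[R_N] = \sum_{n=1}^N\E\big[f^{(n)}(p^{(n)}) - f^{(n)}(p^*)\big] = \sum_{n=1}^N a_n$, which combined with the displayed bound yields $\E[R_N]\le \frac{\sigma^2_{T,E,K}}{2\mu_K}(1+\ln N)$.

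The computation is essentially routine once Lemma \ref{lem:per_round} and the strong convexity constant $\mu_K>0$ of Section \ref{sec:strong_convexity} are available; the only points needing care are the bookkeeping of the telescoping identity and discarding its nonpositive boundary term $-N\Delta_{N+1}$, and the measurability argument replacing the adversarially-arriving $f^{(n)}(p^{(n)})$ by $\mathbf f(p^{(n)})$. Neither constitutes a genuine obstacle — the substantive work (the one-step inequality and establishing $\mu_K>0$) is already done — so I would present this theorem as a short corollary-style argument rather than a lengthy proof.
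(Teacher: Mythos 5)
Your proposal is correct and follows essentially the same route as the paper's proof: substitute $\epsilon^{(n)}=1/(\mu_K n)$ into Lemma \ref{lem:per_round}, sum, exploit the telescoping identity $\sum_{n=1}^N\left((n-1)\Delta_n - n\Delta_{n+1}\right) = -N\Delta_{N+1}\le 0$, and finish with the harmonic-sum bound. Your additional tower-property remark justifying $\E[R_N]=\sum_n a_n$ is a fine (and slightly more careful) elaboration of a step the paper states without comment.
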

\begin{proof} Observe that $\E[R_N]=\sum_{n=1}^N a_n$. Summing \eqref{eq:per_round_grad} over $n=1, \dots, N$ gives
\[
    2\sum_{n=1}^N a_n \le \sum_{n=1}^N \left(\frac{\Delta_n-\Delta_{n+1}}{\epsilon^{(n)}} - \mu_K\Delta_n\right) + \sum_{n=1}^N \epsilon^{(n)} \sigma^2_{T,E,K}.
\]
The first term on the right-hand side is a telescoping sum which evaluates to
\begin{align*}
   \sum_{n=1}^N \left(\frac{\Delta_n-\Delta_{n+1}}{\epsilon^{(n)}} - \mu_K\Delta_n\right)
   &= \sum_{n=1}^N \left(\mu_K n (\Delta_n-\Delta_{n+1}) - \mu_K\Delta_n\right) \\
   &= \mu_K \sum_{n=1}^N \left((n{-}1)\Delta_n - n\Delta_{n+1}\right)\\
   &= -\,\mu_K\,N\,\Delta_{N+1}.
\end{align*}
Dropping this non-positive term, we are left with
\[
  2\sum_{n=1}^N a_n \;\le\; \sum_{n=1}^N \epsilon^{(n)}\,\sigma^2_{T,E,K}
  \;=\; \frac{\sigma^2_{T,E,K}}{\mu_K}\sum_{n=1}^N \frac{1}{n}.
\]
Invoking the classic $\sum_{n=1}^N \tfrac{1}{n} \le 1{+}\ln N$ inequality and dividing by 2 yields the desired result.
\end{proof}

\paragraph{Acknowledgements.} The authors thank Alexandre Belloni, John Birge, Rene Caldentey, Chamsi Hssaine, and Nian Si for helpful discussions and feedback during the preparation of this paper. The authors also thank the Booth School of Business, University of Chicago for financial support that enabled this research. The empirical experiments and models training described in this paper were conducted on Chicago Booth's Pythia Supercomputer Cluster with the assistance of the Chicago Booth IT Department to whom the authors are grateful.

\FloatBarrier

\bibliography{main}
\bibliographystyle{plainnat}

\end{document}